\newtheorem*{theorem*}{Theorem}
\newtheorem{lemma}{Lemma}
\newtheorem{theorem}{Theorem}
\begin{document}
 
\title{Extremal k-packings in compact non-orientable  surfaces}

\author{Ernesto Girondo\thanks{Partially supported by grant MTM2016-79497-P} and Cristian Reyes\thanks{This work was funded by the CONICYT PFCHA/Becas Chile 72180175}}

\date{Departamento de Matem\'aticas, Universidad Aut\'onoma de Madrid (Spain) \\
\ \\
ernesto.girondo@uam.es, cristianr.reyes@estudiante.uam.es}

\maketitle

\begin{abstract}
An extremal $k$-packing is a collection of $k$ mutually disjoint metric discs, embedded in a surface, whose radius is maximal for the given topology. We study compact non-orientable surfaces of genus $g\ge 3$ containing extremal $k$-packings.
\end{abstract}

\section{Introduction and statement of results}

 C. Bavard showed in \cite{Bavard_1996} that the radius $R$ of an embedded metric disc in a compact hyperbolic surface $X$ satisfies the inequality 
 $$\cosh R \le \frac{1}{2\sin \frac{\pi}{6-6\chi} }$$ 
 where $\chi$ is the Euler characteristic of $X$ and the metric under consideration is the natural hyperbolic metric. He also proved that the inequality becomes an equality precisely when $X$ is uniformized by a torsion-free Fuchsian group $K$ acting in the unit disc $\mathbb{D}$ such that at some point $z$  the Dirichlet domain
$$D(K,z):=\{w\in\mathbb{D}:\ d(w,z)\le d(w,\gamma(z)),\ \forall\gamma\in K\}$$ 
is a (hyperbolic) regular polygon with angle $2\pi/3$. The corresponding surfaces are known as  \emph{extremal surfaces}, and have been widely investigated in the literature both in the orientable and in the nonorientable case  (see \cite{Bavard_1996}, \cite{Girondo_Gonzalez-Diez_1999}, \cite{Bacher_Vdovina_2002},  \cite{Girondo_Nakamura_2007}, \cite{Nakamura_2016}) .

\

The first author studied the embedding of $k$ metric disks, the so called \emph{$k$-packings}, and he proved in \cite{Girondo_2018} that if  a set of $k\ge 1$ pairwise disjoint metric discs of fixed radius $R$ are embedded in a compact Riemann surface $X$ with centres at $p_1, \ldots, p_k$, then a fundamental domain for the Fuchsian group $K$ uniformizing $X$ can be obtained as the union $\bigcup\limits_{j=1}^kD(K,z_j,\{z_1,\ldots ,z_k\})$ of the Dirichlet cells $$D(K,z_j,\{z_1,\ldots ,z_k\}):=\{w\in\mathbb{D}:\ d(w,z_j)\le d(w,\gamma(z)),\ \forall\gamma\in K,\ \forall z\in\{z_1,\ldots ,z_k\}\}$$ where $z_j \in \mathbb{D}$ is a point in the fibre of the centre $p_j\in X$ by the canonical projection $\mathbb{D} \to \mathbb{D}/K\simeq X$, and $D(K,z_j,\{z_1,\ldots ,z_k\})$ contains the disc of radius $R$ and centre $z_j$. As a consequence, the  radius $R$ verifies $$\cosh R \le \frac{1}{2\sin \frac{k\pi}{12g+6k-12} }$$ and one can study \emph{extremal} $k$-\emph{packings}, namely, collections of $k$ disjoint metric discs whose radius gives an identity in the above inequality.

\

In this paper we extend the notion of $k$-extremal surface to the non-orientable case, where one has 

\begin{theorem*}If $R$ is the radius of a $k$-packing inside a compact non-orientable surface of genus $g\ge 3$, then $R$ must satisfy the inequality $$\cosh R \le \frac{1}{2\sin \frac{k\pi}{6g+6k-12}} .$$
\end{theorem*}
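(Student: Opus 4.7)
The plan is to reduce the statement to the orientable case proved in \cite{Girondo_2018} by lifting to the orientable double cover. Let $X$ be a compact non-orientable surface of genus $g\geq 3$ carrying a $k$-packing of radius $R$, and uniformize $X$ by an NEC group $\Gamma$ acting on $\mathbb{D}$. The index-$2$ subgroup $\Gamma^+\triangleleft\Gamma$ of orientation-preserving elements is a torsion-free Fuchsian group, and $\widetilde{X}:=\mathbb{D}/\Gamma^+$ is the orientable double cover of $X$, with the hyperbolic metric making the projection $\pi\colon\widetilde{X}\to X$ a local isometry of degree $2$. From $\chi(\widetilde{X})=2\chi(X)=2(2-g)$ one reads off that the orientable genus of $\widetilde{X}$ is $\widetilde{g}=g-1\geq 2$, placing $\widetilde{X}$ within the range where \cite{Girondo_2018} applies.

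Next I would lift the packing. Any embedded metric disc $B$ of radius $R$ in $X$ is simply connected, so it is evenly covered by $\pi$: its preimage $\pi^{-1}(B)$ splits as the disjoint union of two isometric copies, each an embedded disc of radius $R$ in $\widetilde{X}$. Applying this to each $B_j$ of the given $k$-packing $B_1,\dots,B_k$ yields a family $\{\widetilde{B}_j^{\,i}\}_{1\le j\le k,\,i=1,2}$ of $2k$ discs of radius $R$. They are pairwise disjoint, since any intersection between two of them would project to an intersection in $X$, contradicting either the disjointness of the original packing (if $j\neq j'$) or the embeddedness of $B_j$ (if $j=j'$). Hence $\widetilde{X}$ carries a $2k$-packing of the same radius $R$.

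The orientable inequality, applied to $\widetilde{X}$ with parameters $\widetilde{g}=g-1$ and $2k$, then gives
$$\cosh R \le \frac{1}{2\sin\dfrac{2k\pi}{12(g-1)+6(2k)-12}} = \frac{1}{2\sin\dfrac{2k\pi}{2(6g+6k-12)}} = \frac{1}{2\sin\dfrac{k\pi}{6g+6k-12}},$$
which is exactly the claimed bound.

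The main obstacle is really just the lifting step: one must make sure that pulling back a $k$-packing to the double cover produces a genuine $2k$-packing, rather than a configuration where some discs are identified or only immersed. Once the simple connectedness of metric discs and the local-isometry property of $\pi$ are in place, this is a routine covering-space argument; the rest is arithmetic with Euler characteristics, exploiting the identity $\chi(\widetilde{X})=2\chi(X)$ which turns the orientable denominator $12\widetilde{g}+12k-12$ into $2(6g+6k-12)$ and produces the sought-after bound after the factor-of-two cancellation inside the sine.
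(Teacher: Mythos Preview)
Your argument is correct, and it is a genuinely different route from the paper's. The paper proves the bound \emph{directly} on the non-orientable surface: it takes the Dirichlet cells $D_j=D(K,z_j,\{z_1,\ldots,z_k\})$ associated to the centres of the discs, applies B\"or\"oczky's packing-density inequality to each cell, sums the resulting area bounds, and uses that the total area equals $2\pi(g-2)$. A little hyperbolic trigonometry on the equilateral triangle of side $2R$ then converts the angle bound $\alpha_R\ge \frac{k\pi}{3g+3k-6}$ into the stated inequality on $\cosh R$.

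Your approach instead lifts the packing to the orientation double cover and invokes the orientable bound from \cite{Girondo_2018}; the arithmetic $12(g-1)+6(2k)-12=2(6g+6k-12)$ does the rest. This is shorter and conceptually clean, and it dovetails nicely with the paper's final section, where exactly this double-cover relationship is exploited in the opposite direction (showing that the double cover of a non-orientable $k$-extremal surface is $2k$-extremal). The trade-off is that the paper's direct argument, via B\"or\"oczky, immediately yields the equality characterisation---extremality forces every Dirichlet cell to be a regular polygon with angle $2\pi/3$---which is used throughout the rest of the paper (Theorem~\ref{th:char-triang}, the edge-grafting constructions, etc.). Your proof gives the inequality but not this structural information without further work, so for the purposes of the paper the direct route is the more useful one.
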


We refer to \cite{DeBlois_2018} for a  general result of this kind. Our approach to non-orientable $k$-extremal surfaces (namely, surfaces with a $k$-packing whose radius $R$ reaches the above bound) is via NEC groups. This way of addressing extremality allows us to find the characterization

\begin{theorem*}  A compact non-orientable  surface of genus $g\ge 3$ admits an extremal $k$-packing if and only if it is uniformized by a torsion free proper NEC group with index $12g+12k-24$ inside the extended triangle group $\Delta^{\pm}\left(2,3,\frac{6g+6k-12}{k}\right)$.
\end{theorem*}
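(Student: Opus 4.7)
The plan is to reduce the statement to the orientable case established in Girondo 2018, by passing to the orientable double cover $\pi\colon\tilde{X}\to X$, which is a closed orientable surface of genus $g-1$. If $K$ is the (torsion-free, proper) NEC group uniformizing $X$, its orientation-preserving index-$2$ subgroup $K^+$ uniformizes $\tilde{X}$. The key observation linking the two regimes is the numerical coincidence
$$\cosh R \;=\; \frac{1}{2\sin\frac{k\pi}{6g+6k-12}} \;=\; \frac{1}{2\sin\frac{2k\pi}{12(g-1)+6(2k)-12}},$$
so extremality for a $k$-packing in non-orientable genus $g$ is the same condition as extremality for a $2k$-packing in orientable genus $g-1$. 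Writing $m:=(6g+6k-12)/k$, I would then deduce the desired $K\subset\Delta^{\pm}(2,3,m)$ from the corresponding Fuchsian statement $K^+\subset\Delta(2,3,m)$ coming from Girondo 2018.

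For the forward direction, given an extremal $k$-packing with centres $p_1,\dots,p_k$ on $X$, torsion-freeness of $K$ makes $\pi$ trivial over each disc, so the packing lifts to a $2k$-packing on $\tilde{X}$ of the same radius centred at the preimages $\tilde{p}_i^{\pm}$. By the numerical coincidence this lift is extremal in the orientable sense, so Girondo 2018 places $K^+$ inside $\Delta(2,3,m)$ with index $12g+12k-24$ and realizes its Dirichlet cells as congruent regular $m$-gons of interior angle $2\pi/3$, whose $K^+$-translates produce the standard $(2,3,m)$-tessellation $\mathcal{T}$ of $\mathbb{D}$. To upgrade from $K^+$ to $K$, I would argue that any $\tau\in K\setminus K^+$ descends to the non-trivial deck transformation of $\pi$, hence swaps $\tilde{p}_i^+$ with $\tilde{p}_i^-$ and permutes the Dirichlet cells, and therefore preserves $\mathcal{T}$ as an isometry of $\mathbb{D}$; this forces $\tau\in\Delta^{\pm}(2,3,m)$. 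Comparing $\mathrm{Area}(X)=2\pi(g-2)$ with the fundamental triangle area $\pi(g-2)/(6g+6k-12)$ then yields the claimed index $12g+12k-24$.

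For the converse, suppose $K$ is as stated. The same area calculation confirms that $\mathbb{D}/K$ is non-orientable of genus $g$. The $m$-gon centres of $\mathcal{T}$ form a single $\Delta^{\pm}(2,3,m)$-orbit with dihedral stabilizer of order $2m$; since $K$ is torsion-free this stabilizer meets $K$ trivially, and a fundamental-domain count shows that exactly $(12g+12k-24)/(2m)=k$ such centres lie in distinct $K$-orbits, one per fundamental domain of $K$. I would take their projections as the centres $p_1,\dots,p_k\in X$ of a $k$-packing, each carrying the image of the inscribed disc of the corresponding $m$-gon; this disc has radius $R=\operatorname{arcosh}(1/(2\sin(\pi/m)))$, the extremal value. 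Because distinct $m$-gons have disjoint interiors and the $K$-orbits of the chosen discs remain disjoint (again by torsion-freeness), the projections give $k$ pairwise disjoint embedded discs on $X$, realizing the extremal $k$-packing.

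The step requiring the most care is the forward-direction claim that every $\tau\in K\setminus K^+$ lies in $\Delta^{\pm}(2,3,m)$. Two subsidiary points must be justified: (i) the Dirichlet cell construction $D(K^+,\tilde{z}_i^{\pm},\{\tilde{z}_1^{\pm},\dots,\tilde{z}_k^{\pm}\})$ is equivariant under every isometry of $\mathbb{D}$ that permutes the centres, so that $\tau$ genuinely permutes the cells, and (ii) any isometry of $\mathbb{D}$ preserving the collection of regular $m$-gons in $\mathcal{T}$ automatically preserves the full $(2,3,m)$-triangulation obtained by coning each polygon from its centre to its vertices and edge midpoints. Both points are essentially formal once formulated, but together they provide the bridge from the known orientable theorem to its non-orientable counterpart; bypassing the double cover and working directly with Dirichlet cells for the NEC group $K$ would be harder, since the cell geometry is complicated by possible reflection edges on the boundary.
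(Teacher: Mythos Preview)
Your argument is correct, but it takes a genuinely different route from the paper's.

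The paper proves the theorem in one stroke, directly from the Dirichlet-cell framework it has just set up for the NEC group $K$ itself. Bor\"oczky's equality case forces each cell $D_j=D(K,z_j,\{z_1,\dots,z_k\})$ to be a regular $N$-gon of angle $2\pi/3$ with $N=(6g+6k-12)/k$; the barycentric subdivision of each $N$-gon into $2N$ triangles of angles $\pi/2,\pi/3,\pi/N$ then exhibits the $(2,3,N)$-tessellation as $K$-invariant, so $K\le\Delta^{\pm}(2,3,N)$ with index $2kN=12g+12k-24$. The converse is read off the same picture. No passage to the double cover is needed.

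Your detour through $\tilde{X}=\mathbb{D}/K^{+}$ works: the lift of the packing is an extremal $2k$-packing in orientable genus $g-1$, the cited orientable theorem gives $K^{+}\le\Delta(2,3,m)$, and your equivariance argument for $\tau\in K\setminus K^{+}$ (normality of $K^{+}$ in $K$ plus invariance of the centre set under $\tau$) correctly shows that $\tau$ permutes the regular $m$-gons and hence lies in $\Delta^{\pm}(2,3,m)$. The orbit count in your converse is also fine. What this buys you is a reduction to a black-box citation; what it costs is the extra bookkeeping in points (i) and (ii) of your last paragraph.

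One remark: your final sentence, that working directly with Dirichlet cells for $K$ ``would be harder, since the cell geometry is complicated by possible reflection edges,'' is not right. A torsion-free proper NEC surface group contains glide reflections but \emph{no} reflections (reflections have order $2$), so the Dirichlet cells $D(K,z_j,\{z_1,\dots,z_k\})$ are ordinary convex polygons with genuine side-pairings, exactly as in the Fuchsian case. That is precisely why the paper can, and does, run the Bor\"oczky argument for $K$ verbatim and obtain the theorem without ever invoking the double cover.
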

 
In order to find conditions that guarantee the existence of non-orientable $k$-extremal surfaces of genus $g$ (see again \cite{DeBlois_2018} for another approach to this problem), we introduce what we call \emph{edge-grafting}, a new (at least to our knowledge) combinatorial machinery with topological flavor, and prove the following

\begin{theorem*} Given two integers $k\ge 1$ and  $g\ge 3$ there exist compact non-orientable  $k$-extremal surfaces of genus $g$ if and only if $k$ divides $6(g-2)$.
\end{theorem*}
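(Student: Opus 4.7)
For the ``only if'' direction, I would argue directly from the characterization in the previous theorem. If a non-orientable surface of genus $g\ge 3$ carries an extremal $k$-packing, then it is uniformized by a torsion-free proper NEC subgroup of the extended triangle group $\Delta^{\pm}(2,3,n)$ with
\[
n \;=\; \frac{6g+6k-12}{k} \;=\; 6 + \frac{6(g-2)}{k}.
\]
For this triangle group to be defined as a cocompact group acting on $\mathbb{D}$, the third label must be a positive integer (automatically $\ge 7$ under $k\ge 1$, $g\ge 3$). Hence $k$ must divide $6(g-2)$.

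For the converse, assume $k\mid 6(g-2)$, so that $n=(6g+6k-12)/k$ is an integer $\ge 7$ and $\Delta^{\pm}(2,3,n)$ is well defined. The task reduces to producing a torsion-free proper NEC subgroup $K$ of index $12(g+k-2)$ in this triangle group whose quotient is non-orientable of genus $g$. Geometrically, this means assembling a fundamental domain for $K$ out of $k$ regular hyperbolic $n$-gons with vertex angle $2\pi/3$ (the Dirichlet cells around the packing centres) via an edge-pairing producing the required surface. My plan would be to proceed by induction on $g$ with $k$ fixed: first construct explicit base examples for the smallest admissible genera compatible with the divisibility $k\mid 6(g-2)$, and then use the edge-grafting machinery advertised in the introduction as the inductive step, each application modifying the side-pairing so as to raise $g$ by a controlled amount while preserving both $k$ and the extremal triangle-group tessellation.

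The main obstacle will be controlling the effect of edge-grafting on the topology of the quotient. One must define the operation so that (i) the resulting side-pairing still defines a subgroup of the same extended triangle group, (ii) the associated NEC group remains torsion-free and genuinely contains orientation-reversing elements, and (iii) the non-orientable genus jumps by a known and sufficiently fine increment so that, together with suitable base cases, one realises every $g$ with $k\mid 6(g-2)$. Property (iii) will likely force a small case analysis in terms of the quotient $6(g-2)/k$. I expect the base cases to be accessible through explicit NEC signatures and Riemann--Hurwitz-type index counts, whereas the combinatorial and topological control of edge-grafting is the substantive part of the argument.
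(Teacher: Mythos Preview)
Your ``only if'' argument is the same as the paper's.

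For the ``if'' direction your organization differs from the paper's, and as written it contains a real gap. You propose to fix $k$ and induct on $g$ via edge-grafting, starting from a base example at the smallest admissible genus for that $k$. But for a general $k$ this base case is already an \emph{imprimitive} extremal surface: the minimal admissible genus is $g=2+k/\gcd(k,6)$, giving $N=6+6/\gcd(k,6)\in\{7,8,9,12\}$, and the uniformizing group sits with index $k/k_N>1$ below the primitive one for that $N$. You offer no concrete construction for these infinitely many base cases beyond a hope that ``explicit NEC signatures and Riemann--Hurwitz-type index counts'' will do, and edge-grafting cannot help here since it changes $N$ rather than $k$.

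The paper sidesteps this by reversing the roles of the two ingredients. Edge-grafting is used \emph{only} to produce one primitive $k_N$-extremal surface of genus $g_N$ for each $N\ge 7$, starting from four explicit surfaces $X_7,X_8,X_9,X_{12}$; this is carried out in the section preceding the theorem, so the proof itself can take it as given. Every remaining (imprimitive) pair $(k,g)\in L_N$ is then obtained in one stroke from a short algebraic lemma: any NEC surface group of genus $g\ge 3$ admits, for every $n\ge 1$, an index-$n$ subgroup that is again a non-orientable surface group, proved via the epimorphism to $\mathbb{Z}$ sending two of the glide-reflection generators to $\pm 1$ and the rest to $0$. This lemma is precisely the ingredient missing from your outline; once you have it, the separate induction on $g$ for each fixed $k$ becomes unnecessary, since the imprimitive index is always a multiple of the primitive one.
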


We  next extend and improve  some of the results of \cite{Girondo_2018} to the non-orientable case. It is noteworthy that a surface can be simultaneously $k_1$- and $k_2$- extremal for two values $k_1\neq k_2$. We give a complete description of how this can happen in the following 

\begin{theorem*} If a compact non-orientable hyperbolic surface of genus $g$ is $k$-extremal for $m>1$ values $k_1, \ldots, k_m$ then $m=2$ and 
 one of the following holds:
\begin{itemize}
\item[1)] $\{k_1,k_2\}=\left\{2g-4,\displaystyle\frac{g-2}{2}\right\}$ 
\item[2)] $ \{k_1,k_2\}=\left\{6g-12,\displaystyle\frac{3g-6}{4}\right\}$ 
\end{itemize}
Conversely, for every even genus $g\ge 4$  there exists some compact non-orientable surface of genus $g$ which is $k_1$- and $k_2$- extremal for the values $k_1,k_2$ in case 1). Also, for every genus $g \equiv 2 \pmod 4$  there exists some compact non-orientable surface of genus $g$ which is $k_1$- and $k_2$- extremal for the values $k_1,k_2$ in case 2).
\end{theorem*}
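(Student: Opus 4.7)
By the previous characterization, $X$ is $k_i$-extremal for $i=1,\ldots,m$ if and only if its uniformizing NEC group $K$ sits, up to conjugation, as a torsion-free proper NEC subgroup of index $12g+12k_i-24$ inside $\Delta^{\pm}(2,3,n_{k_i})$, where $n_k:=(6g+6k-12)/k=6+6(g-2)/k$. Fixing $K\subset\Delta^{\pm}(2,3,n_{k_1})$ and writing the other embeddings as $K\subset\gamma_i\Delta^{\pm}(2,3,n_{k_i})\gamma_i^{-1}$, the NEC triangle groups $G_i:=\gamma_i\Delta^{\pm}(2,3,n_{k_i})\gamma_i^{-1}$ share $K$ as a common finite-index subgroup and are therefore pairwise commensurable. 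My plan is to feed this commensurability into Singerman's classification of triangle-group inclusions.

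Passing to orientation-preserving halves reduces the question to the following: for which pairs $(n,n')$ with $n\neq n'$ do the Fuchsian triangle groups $\Delta(2,3,n)$ and $\Delta(2,3,n')$ share a common finite-index Fuchsian subgroup? Using Singerman's complete list of inclusions between cocompact Fuchsian triangle groups (and its NEC counterpart in the Bujalance et al.\ extension), only two sporadic configurations can produce such a common subgroup: first, combining the sporadic inclusion $\Delta(3,3,7)\subset\Delta(2,3,7)$ of index $8$ with the normal inclusion $\Delta(3,3,7)\triangleleft\Delta(2,3,14)$ of index $2$ makes $\Delta(3,3,7)$ a common subgroup of $\Delta(2,3,7)$ and $\Delta(2,3,14)$; second, combining the sporadic inclusion $\Delta(9,9,9)\subset\Delta(2,3,9)$ of index $12$ with the chain $\Delta(9,9,9)\triangleleft\Delta(3,3,9)\triangleleft\Delta(2,3,18)$ of total index $6$ makes $\Delta(9,9,9)$ a common subgroup of $\Delta(2,3,9)$ and $\Delta(2,3,18)$. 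A direct inspection of the list also shows that no signature $(2,3,n)$ occurs in more than one such configuration, forcing $m=2$. Inverting the relation $k=6(g-2)/(n-6)$ on the two pairs $\{(2,3,7),(2,3,14)\}$ and $\{(2,3,9),(2,3,18)\}$ yields respectively $\{6g-12,(3g-6)/4\}$ and $\{2g-4,(g-2)/2\}$, as stated.

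For the converse, given $g$ in the stated congruence class, I build $K$ as a torsion-free proper NEC subgroup of the extended parent triangle group, namely $\Delta^{\pm}(9,9,9)$ in Case 1 and $\Delta^{\pm}(3,3,7)$ in Case 2, with index equal to the co-area of a genus-$g$ non-orientable surface divided by the co-area of the parent. The congruence conditions $g$ even in Case 1 and $g\equiv 2\pmod 4$ in Case 2 are exactly the divisibility required for these indices to be integral. Once $K$ is constructed, the inclusions of the parent into $\Delta^{\pm}(2,3,n_{k_1})$ and $\Delta^{\pm}(2,3,n_{k_2})$ produced above automatically give $K$ the correct indices inside both, so $\mathbb{D}/K$ is simultaneously $k_1$- and $k_2$-extremal. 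The main obstacle I anticipate is realizing such a $K$ concretely: it must be torsion-free, contain orientation-reversing elements, and avoid all reflections so that $\mathbb{D}/K$ is a closed non-orientable manifold, while having the prescribed index. I expect the edge-grafting technique introduced earlier in the paper to be the natural tool for this step, with an explicit transitive permutation representation of the parent group on a set of the required size as a possible alternative.
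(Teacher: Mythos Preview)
Your overall strategy---reduce to commensurability of the triangle groups $\Delta^{\pm}(2,3,n_{k_i})$, classify the possible pairs, and invert $n\mapsto k$---is exactly the paper's. There is, however, a genuine gap in the forward direction. Singerman's list classifies \emph{inclusions} between triangle groups, not commensurability: two groups can be commensurable without sharing a triangle subgroup that Singerman would detect. What you actually need is the dichotomy the paper uses (implicitly via \cite{Girondo_2018}): if $\Delta(2,3,n)$ is non-arithmetic then by Margulis its commensurator is discrete, hence equals $\Delta(2,3,n)$ by maximality, so no distinct $\Delta(2,3,n')$ can be commensurable with it; if it is arithmetic, one must consult Takeuchi's partition of arithmetic triangle groups into commensurability classes, which shows that among the signatures $(2,3,n)$ only the pairs $\{7,14\}$ and $\{9,18\}$ fall into a common class. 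Your Singerman argument exhibits a common triangle subgroup in those two cases but does not rule out further coincidences among the remaining arithmetic values $n\in\{8,10,11,12,16,24,30\}$.

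For the converse the paper does not use edge-grafting. It takes the \emph{intersection} groups $\Delta^{\pm}(3,3,9)=\Delta^{\pm}(2,3,9)\cap\Delta^{\pm}(2,3,18)$ and $\Delta^{\pm}(3,3,7)=\Delta^{\pm}(2,3,7)\cap\Delta^{\pm}(2,3,14)$ (not your smaller $\Delta^{\pm}(9,9,9)$), computes that the smallest admissible genus gives exactly the Hurwitz index of each, and then writes down an explicit $18$-gon (resp.\ $42$-gon) side-pairing producing a torsion-free proper NEC surface subgroup of that index. Higher genera are obtained not by edge-grafting but by Lemma~\ref{le:sub}, which supplies index-$n$ non-orientable surface subgroups of any non-orientable surface group. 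Your sketch leaves the base-case construction undone; the paper's explicit fundamental domains are the missing ingredient.
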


We also address the possible existence of several extremal $k$-packings within a given surface, and we find

\begin{theorem*}
If $k\ge 1$ and $g\ge 3$ are such that $
\displaystyle\frac{6g+6k-12}{k} \in\{7,8,9,10,11,12,14,16,18,24,30\}
$
then there exists a $k$-extremal compact non-orientable surface with multiple extremal $k$-packings. 

If $\displaystyle\frac{6g+6k-12}{k} \not \in\{7,8,9,10,11,12,14,16,18,24,30\}$ then all compact non-orientable $k$-extremal surfaces have a unique extremal $k$-packing.
\end{theorem*}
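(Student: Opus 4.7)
The plan is to exploit the characterization theorem stated above to translate the geometric question into a group-theoretic one about embeddings of NEC groups into extended triangle groups, and then to appeal to a Singerman-type classification of maximal NEC groups.

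First, I would show that the extremal $k$-packings on a $k$-extremal surface $X=\mathbb{D}/K$ are in bijection with the conjugacy classes (under the normalizer $N_{\mathrm{Isom}(\mathbb{D})}(K)$, which descends to the automorphism group of $X$) of extended triangle subgroups $\Delta^{\pm}(2,3,n)$ of $\mathrm{Isom}(\mathbb{D})$ containing $K$, where $n=(6g+6k-12)/k$. The key point is that once a containment $K\subset \Delta^{\pm}(2,3,n)$ is fixed, the centres of an extremal packing are forced to be the projections to $X$ of the order-$n$ rotation fixed points of the associated $(2,3,n)$-tessellation of $\mathbb{D}$; an orbit count (the stabilizer of such a fixed point has order $2n$ and the index is $12g+12k-24$) gives exactly $k$ such centres, recovering one packing. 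Two containments yield the same packing if and only if they are related by an isometry of $X$, so counting packings becomes counting conjugacy classes of inclusions.

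Next, I would reduce this to a \emph{maximality} question: if $K\subset \Gamma$ and $K\subset \Gamma'$ with $\Gamma,\Gamma'\cong \Delta^{\pm}(2,3,n)$ two distinct subgroups, then $\langle \Gamma,\Gamma'\rangle$ still has $K$ as a finite-index subgroup, hence is itself a discrete NEC group strictly containing $\Gamma$, so $\Gamma$ is non-maximal. Conversely, a proper NEC overgroup $\widetilde{\Gamma}\supsetneq \Gamma$ produces a second triangle group $\Gamma'=\tilde\gamma\Gamma\tilde\gamma^{-1}\neq \Gamma$ by conjugation by $\tilde\gamma\in \widetilde{\Gamma}\setminus \Gamma$, and if $K$ is normal in $\widetilde{\Gamma}$ it lies in both $\Gamma$ and $\Gamma'$, yielding two distinct extremal packings on $X$. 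Thus the problem reduces to identifying the values of $n$ for which $\Delta^{\pm}(2,3,n)$ is non-maximal among NEC groups. The NEC analogue of Singerman's 1972 list of non-maximal Fuchsian triangle groups (as developed by Bujalance, Estévez, Gamboa, and Gromadzki for NEC groups) yields exactly the eleven values $\{7,8,9,10,11,12,14,16,18,24,30\}$. For $n$ outside this list, maximality forces $\Gamma=\Gamma'$, hence uniqueness of the packing.

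For the existence direction I would, for each admissible pair $(g,k)$ with $n$ in the list, select a torsion-free subgroup $K$ which is normal in an NEC overgroup $\widetilde{\Gamma}$ of $\Delta^{\pm}(2,3,n)$, of the correct index, and whose quotient $\mathbb{D}/K$ is non-orientable of genus $g$. The main obstacle will be precisely this last construction step: for every such $n$ one must inspect the signature of the specific overgroup $\widetilde{\Gamma}$ and exhibit a surjection onto a finite group whose kernel is torsion-free, of the prescribed index, and produces a non-orientable quotient of genus exactly $g$. This is a delicate case-by-case verification, because the sign structure controlling orientability of the quotient must be matched against the sign sequence of $\widetilde{\Gamma}$'s signature. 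Beyond that, transcribing Singerman's Fuchsian classification to the NEC setting, taking proper care of reflections and boundary components, is the other non-routine piece; the remainder is bookkeeping.
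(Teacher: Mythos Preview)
Your reduction to a maximality question is where the argument breaks down. The extended triangle groups $\Delta^{\pm}(2,3,n)$ are maximal among NEC groups for \emph{every} $n\ge 7$ (this is the NEC analogue of the fact that $(2,3,n)$ never appears on Singerman's list), so no Singerman-type classification can single out the eleven values. The step ``$\langle\Gamma,\Gamma'\rangle$ still has $K$ as a finite-index subgroup, hence is itself a discrete NEC group'' is simply false: containing a common lattice $K$ does not force the group generated by $\Gamma$ and $\Gamma'$ to be discrete, and in the arithmetic case it typically is not. Consequently the converse direction of your maximality dichotomy also collapses: there is no NEC overgroup $\widetilde\Gamma\supsetneq\Delta^{\pm}(2,3,n)$ in which to take $K$ normal.

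The paper's argument is genuinely different. For uniqueness it observes that two distinct packings give $K\le\Delta_1^{\pm}$ and $K\le\Delta_2^{\pm}=\gamma\Delta_1^{\pm}\gamma^{-1}$ with $\gamma$ in the \emph{commensurator} $\mathrm{Comm}^{\pm}(\Delta_1^{\pm})$. By Margulis, this commensurator is discrete precisely when $\Delta^{+}(2,3,n)$ is non-arithmetic, and then maximality forces $\mathrm{Comm}^{\pm}(\Delta_1^{\pm})=\Delta_1^{\pm}$, hence $\Delta_1^{\pm}=\Delta_2^{\pm}$. Takeuchi's classification of arithmetic $(2,3,n)$ triangle groups is what produces the list $\{7,8,9,10,11,12,14,16,18,24,30\}$. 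For existence the paper does not look for an overgroup of $\Delta^{\pm}$; instead, for each arithmetic $n$ it exhibits (or cites, when $k_N=1$) an explicit primitive $k_N$-extremal surface $\mathbb{D}/K$ carrying an order-$2$ isometry whose lift $\tilde\sigma\in N(K)$ lies \emph{outside} $\Delta^{\pm}(2,3,n)$, so that $\tilde\sigma\Delta^{\pm}\tilde\sigma^{-1}\neq\Delta^{\pm}$ is a second triangle group containing $K$. Imprimitive pairs $(k,g)$ on the same line $L_n$ are then handled by passing to finite-index subgroups via the lemma on index-$n$ non-orientable covers. So both the mechanism (arithmeticity and commensurators, not maximality) and the construction (explicit involutions, not normal subgroups of an overgroup) differ from your outline.
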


We finish the paper showing the link between extremality in the orientable and the non-orientable case. The main result we find is 

\begin{theorem*} The canonical orientable double cover of a compact non-orientable $k$-extremal surface of genus $g$ is a compact  $2k$-extremal Riemann surface of genus $g-1$.
\end{theorem*}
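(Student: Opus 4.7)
The approach is essentially a bookkeeping argument combining two facts: the canonical orientable double cover is a Riemannian covering (hence a local isometry for the hyperbolic metric), and the two extremal bounds, when plugged in with the right parameters, coincide numerically. The first step is the arithmetic check: if $X$ is compact non-orientable of genus $g$ and $\tilde{X}\to X$ is the orientable double cover, then $\chi(\tilde{X})=2\chi(X)=2(2-g)=4-2g$, so the genus of $\tilde{X}$ is $g-1$. Substituting $k'=2k$ and $g'=g-1$ into the orientable packing bound recalled in the Introduction gives
\[
\frac{1}{2\sin\frac{k'\pi}{12g'+6k'-12}}=\frac{1}{2\sin\frac{2k\pi}{12g+12k-24}}=\frac{1}{2\sin\frac{k\pi}{6g+6k-12}},
\]
which is precisely the non-orientable bound from the first Theorem of the Introduction. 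Thus a $2k$-packing on $\tilde{X}$ that reaches radius $R$ equal to the non-orientable extremal radius automatically witnesses $2k$-extremality of $\tilde{X}$.

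\noindent Next I would transfer the extremal packing upstairs. Let $p_1,\dots,p_k\in X$ be the centres of an extremal $k$-packing, and let $\pi:\tilde{X}\to X$ be the canonical orientable double cover, endowed with the pulled-back hyperbolic metric so that $\pi$ is a local isometry. Each $p_j$ has exactly two preimages $\tilde{p}_j^{\pm}\in\tilde{X}$, giving $2k$ distinct points. Since the embedded metric disc $B(p_j,R)\subset X$ is contractible, the covering $\pi$ is trivial over it, so $\pi^{-1}(B(p_j,R))$ splits as a disjoint union of two metric discs of radius $R$ centred at $\tilde{p}_j^{\pm}$, and each lifts isometrically; in particular these discs are embedded in $\tilde{X}$.

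\noindent It then remains to verify that the $2k$ lifted discs are pairwise disjoint. Two lifts of the same centre $p_j$ are disjoint by the triviality of $\pi$ over $B(p_j,R)$ used above. Two lifts coming from distinct centres $p_j\ne p_{j'}$ cannot intersect either, for if they did then pushing forward by $\pi$ would produce a point in $B(p_j,R)\cap B(p_{j'},R)\subset X$, contradicting the disjointness of the original packing. Hence $\{\tilde{p}_j^{\pm}\}$ is a $2k$-packing in $\tilde{X}$ of radius $R$, and by the arithmetic match above, it achieves the bound of Bavard--Girondo for the orientable genus $g-1$; so $\tilde{X}$ is $2k$-extremal.

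\noindent The argument is mostly routine once the bound arithmetic is in hand; the only delicate point that deserves a careful sentence is the triviality of the cover over each embedded disc, which is what guarantees that a single centre splits into two disjoint lifts rather than a single disc folded onto itself. This would be the expected obstacle if one were not paying attention, since an orientation-reversing element of the deck transformation group could a priori identify antipodal points of a disc; contractibility of $B(p_j,R)$ rules this out.
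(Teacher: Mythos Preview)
Your argument is correct, but it is not the route taken in the paper. The paper works entirely on the group side: by the characterization theorem, a non-orientable $k$-extremal surface of genus $g$ is uniformized by a torsion-free proper NEC group $K$ of index $12g+12k-24$ in $\Delta^{\pm}(2,3,N)$ with $N=\tfrac{6g+6k-12}{k}$. Passing to the canonical Fuchsian subgroup $K^{+}=K\cap\mathrm{PSL}(2,\mathbb{R})$ gives an index $2$ subgroup of $K$, hence a subgroup of $\Delta^{+}(2,3,N)$ of index $12g+12k-24$. A Riemann--Hurwitz/area computation shows that $\mathbb{D}/K^{+}$ has genus $g-1$, and solving $N=\tfrac{12(g-1)+6k'-12}{k'}$ yields $k'=2k$, so the orientable characterization makes $\mathbb{D}/K^{+}$ a $2k$-extremal Riemann surface.

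Your approach instead lifts the packing geometrically through the covering map and matches the numerical bounds directly. This is more elementary in that it avoids the triangle-group characterization altogether; it only uses that the orientable double cover is a local isometry and that embedded metric discs are contractible (hence evenly covered). The paper's approach, by contrast, is the natural continuation of its own framework and makes the statement an immediate corollary of the earlier structural result. Both arrive at the same arithmetic identity $\tfrac{k'\pi}{12g'+6k'-12}=\tfrac{k\pi}{6g+6k-12}$, just from opposite ends.
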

 
\section{Disc packings in compact non-orientable hyperbolic surfaces} \label{sec:discpack}

We  devote this section to extend the notion of extremal $k$-packing to compact non-orientable surfaces. The following arguments mimic those of \cite{Girondo_2018}, and we include them for the sake of completeness.

\

Let $X$ be a compact hyperbolic non-orientable surface uniformized by a proper NEC group $K$, and let $z_1, \ldots,  z_k$ be representatives in the unit disk of a set of $k$ distinct points $p_1, \ldots, p_k$ in $X$. For every Dirichlet cell 
$D_j=D(K,z_j,\{z_1,\ldots ,z_k\}):=\{w\in\mathbb{D}:\ d(w,z_j)\le d(w,\gamma(z)),\ \forall\gamma\in K,\ \forall z\in\{z_1,\ldots ,z_k\}\}$
whose union $\bigcup_{j=1}^k D_j$ constitute a fundamental domain for $K$, the proportion between the area of a circle of radius $r$ inside $D_j$ and the area $A_j$ of the cell must satisfy 
\begin{equation} \label{eq:Boroczky}
\frac{2\pi(\cosh r -1)}{A_j}\le\frac{3\alpha_r(\cosh r -1)}{\pi-3\alpha_r}
\end{equation}
 where $\alpha_r$ is the angle of an equilateral triangle with side $2r$. This was proved by K. Boroczky in \cite{Boroczky_1978}. 
 
 Therefore 
 $$A_j\ge \frac{2\pi(\pi-3\alpha_r)}{3\alpha_r}$$ 
 and, since the right hand side of this inequality does not depend on $k$, we have $$\sum\limits_{j=1}^kA_j\ge k\times \frac{2\pi(\pi-3\alpha_r)}{3\alpha_r}.$$ 
 
 The sum on the left is precisely the area of a fundamental domain for $K$, and this group uniformizes a compact non-orientable  surface of genus $g$, so 
 $$2\pi(g-2)\ge \frac{2k\pi(\pi-3\alpha_r)}{3\alpha_r}$$
  from which we obtain  $$\alpha_r\ge \frac{k\pi}{3g+3k-6}.$$ 
  
  A bit of hyperbolic trigonometry applied to the equilateral triangle with angle $\alpha_r$ and side $2r$ shows 
  $$\cosh r \sin \alpha_r =\cos \alpha_r/ 2$$ 
  and finally we get 
  $$\cosh r \le \frac{1}{2\sin \frac{k\pi}{6g+6k-12} }.$$

\begin{theorem} \label{th:bound} If $R$ is the radius of a $k$-packing inside a compact non-orientable  surface of genus $g\ge 3$, then $R$ must satisfy the inequality $$\cosh R \le \frac{1}{2\sin \frac{k\pi}{6g+6k-12} }.$$
\end{theorem}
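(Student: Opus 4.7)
The plan is to adapt to the non-orientable setting the approach developed by Girondo in \cite{Girondo_2018} for Riemann surfaces, replacing Fuchsian groups by proper NEC groups. The starting point is to uniformize a non-orientable surface $X$ of genus $g$ by a proper NEC group $K$ acting on $\mathbb{D}$, and to consider lifts $z_1,\ldots,z_k\in\mathbb{D}$ of the $k$ centres of the packing discs. Around each $z_j$ one builds a multi-centre Dirichlet cell $D_j=D(K,z_j,\{z_1,\ldots,z_k\})$ of points closer to $z_j$ than to any other $K$-translate of any $z_i$; the first task is to verify that these cells still tile a fundamental domain $\bigcup_{j=1}^k D_j$ for $K$, and that each $D_j$ contains the embedded disc of radius $R$ centred at $z_j$. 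This extends verbatim the orientable construction because the orientation-reversing elements of $K$ are treated on an equal footing with the orientation-preserving ones in the definition of the cells.

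The second step is a uniform lower bound for the area of each cell via Boroczky's packing inequality \cite{Boroczky_1978}, which for a disc of radius $r$ inscribed in a convex hyperbolic polygon of area $A_j$ gives
$$\frac{2\pi(\cosh r-1)}{A_j}\le\frac{3\alpha_r(\cosh r-1)}{\pi-3\alpha_r},$$
where $\alpha_r$ is the angle of the equilateral hyperbolic triangle of side $2r$. Rearranging produces $A_j\ge 2\pi(\pi-3\alpha_r)/(3\alpha_r)$, a bound independent of $j$. Summing over $j=1,\ldots,k$ and equating the total with the area of a fundamental domain for $K$, which by Gauss--Bonnet for non-orientable genus $g$ equals $2\pi(g-2)$, yields
$$2\pi(g-2)\ge \frac{2k\pi(\pi-3\alpha_r)}{3\alpha_r},$$
and therefore $\alpha_r\ge k\pi/(3g+3k-6)$.

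The last step converts this angle estimate into the claimed radius estimate. The hyperbolic law of cosines applied to the equilateral triangle of angle $\alpha_r$ and side $2r$ gives the identity $\cosh r\sin\alpha_r=\cos(\alpha_r/2)$, and a monotonicity check in $\alpha_r$ then delivers $\cosh R\le 1/(2\sin(k\pi/(6g+6k-12)))$. The main conceptual (as opposed to computational) point is ensuring that Boroczky's inequality, classically stated for convex polygons in the hyperbolic plane, applies to the Dirichlet cells produced by a proper NEC group exactly as it does for Fuchsian groups; once this is granted, the only substantive change from the orientable case is the replacement of the Gauss--Bonnet area $4\pi(g-1)$ by $2\pi(g-2)$, which accounts for the constants $6g+6k-12$ in the bound rather than $12g+6k-12$.
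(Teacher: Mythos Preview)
Your proposal is correct and follows essentially the same argument as the paper: uniformize by a proper NEC group, build the multi-centre Dirichlet cells $D_j$, apply Boroczky's density inequality to each cell, sum and compare with the Gauss--Bonnet area $2\pi(g-2)$, and finish with the identity $\cosh r\sin\alpha_r=\cos(\alpha_r/2)$. The paper's proof differs only in presentation, not in substance.
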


A $k$-packing whose radius reaches this bound is called an \emph{extremal $k$-packing}, and a surface containing an extremal $k$-packing is a \emph{(non-orientable)  $k$-extremal surface of genus $g$}. We refer to \cite{DeBlois_2018} for a more general version of Theorem \ref{th:bound} obtained using a different  language.

\

Recall for further use that the extremal radius for $k$-packings occurs if and only if there is an equality in equation (\ref{eq:Boroczky}) for every $j$. According to Boroczky \cite{Boroczky_1978} this happens precisely when each of the Dirichlet cells $D_j$  is a regular polygon with angle $\frac{2\pi}{3}$. One can easily show (see  \cite{Girondo_2018}) that the number of sides of these polygons is the same for every $j$.

\

The area of a compact non-orientable  $k$-extremal surface of genus $g$, which is $2\pi(g-2)$, must be the area of the union of $k$ regular $N$-gons with angle $2\pi/3$. Therefore $$2\pi(g-2)=k\left((N-2)\pi-N\frac{2\pi}{3}\right)$$ so, we must have 
\begin{equation} \label{eq:equation_k-extremal}
kN=6g+6k-12
\end{equation} 
hence 
\begin{equation} \label{eq:equation_k-extremal-k} 
N=\frac{6g+6k-12}{k}=6+\frac{6(g-2)}{k}.
\end{equation}

Moreover, the natural decomposition of the Dirichlet cells $D_j$ into $2kN$ triangles with angles $\pi/2, \pi/ 3$ and $\pi\slash N$ yields the following characterization of compact non-orientable $k$-extremal surfaces in terms of triangle groups:
 
\begin{theorem} \label{th:char-triang} A compact non-orientable surface of genus $g\ge 3$ admits an extremal $k$-packing if and only if it is uniformized by a torsion free proper NEC group with index $12g+12k-24$ inside the extended triangle group $\Delta^{\pm}\left(2,3,\frac{6g+6k-12}{k}\right)$.
\end{theorem}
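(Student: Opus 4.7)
The plan is to establish both directions by exploiting the natural barycentric decomposition of a regular hyperbolic $N$-gon with interior angle $2\pi/3$ into $2N$ right triangles with angles $\pi/2$, $\pi/3$, $\pi/N$.

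For the forward implication, let $X=\mathbb{D}/K$ be a $k$-extremal non-orientable surface of genus $g$. As recalled just before the theorem, each Dirichlet cell $D_j$ is a regular $N$-gon with angle $2\pi/3$, where $N=(6g+6k-12)/k$ by equation (\ref{eq:equation_k-extremal-k}), and their union is a fundamental domain for $K$. Joining the centre of each $D_j$ to its vertices and to the midpoints of its sides produces a subdivision into $2N$ congruent right triangles of angles $(\pi/2,\pi/3,\pi/N)$. Because the $D_j$'s tile $\mathbb{D}$ and the barycentric subdivision is symmetric under the reflection across any side of the $N$-gon, the subdivisions of neighbouring cells match across shared edges to yield a global $(2,3,N)$-triangle tessellation of $\mathbb{D}$. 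Any element of $K$ permutes the $N$-gons rigidly, hence preserves this finer tessellation, so $K$ is contained in its full symmetry group $\Delta^\pm(2,3,N)$. The index equals the number of triangles in a fundamental domain for $K$, namely $2kN=12g+12k-24$. Torsion-freeness is forced by $K$ uniformizing a smooth surface, and properness (existence of an orientation-reversing element) by $X$ being non-orientable.

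For the converse, suppose $K\le\Delta^\pm(2,3,N)$ is torsion-free and proper NEC with $N=(6g+6k-12)/k$ and index $12g+12k-24$. A Gauss--Bonnet check gives area$=(12g+12k-24)(\pi/6-\pi/N)=2\pi(g-2)$, matching a non-orientable surface of genus $g$. In the $(2,3,N)$-tessellation, the order-$N$ vertices are precisely the centres of the regular $N$-gons with angle $2\pi/3$; these $N$-gons tile $\mathbb{D}$, and each contains $2N$ of the underlying triangles. Since $2N$ divides $12g+12k-24$ with quotient $k$, I would choose a fundamental domain $F$ for $K$ consisting of $k$ such $N$-gons. Their centres $z_1,\ldots,z_k$ project to $k$ distinct points $p_1,\ldots,p_k$ in $X$; by the mirror symmetry of the tessellation across each shared side, the perpendicular bisector of two adjacent centres coincides with their common edge. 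Therefore the Dirichlet cells $D(K,z_j,\{z_1,\ldots,z_k\})$ are exactly the $N$-gons, and the inscribed disc of each $N$-gon has radius $R$ satisfying $\cosh R=1/(2\sin(\pi/N))=1/(2\sin(k\pi/(6g+6k-12)))$. These inscribed discs descend to an extremal $k$-packing on $X$.

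The main obstacle is the coherence argument in the forward direction: one must rule out the possibility that a side-pairing generator of $K$, though mapping one $D_j$ to a congruent polygon, fails to respect the triangle subdivision. This is resolved by observing that the barycentric subdivision of a regular $N$-gon is canonically determined by its isometry type, so any hyperbolic isometry identifying two $N$-gons of the tessellation automatically extends to a symmetry of the global $(2,3,N)$-triangulation. A minor bookkeeping point in the converse is to verify that the $k$ centres of the $N$-gons in $F$ lie in distinct $K$-orbits; this holds because they lie in the interior of $F$ (being at positive distance from every side), and the interior of a fundamental domain meets each orbit in at most one point.
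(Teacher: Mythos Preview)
Your proof is correct and follows exactly the approach the paper has in mind: the paper's entire argument is the single sentence preceding the theorem, noting that the natural decomposition of the $k$ Dirichlet $N$-gons into $2kN$ triangles with angles $\pi/2,\pi/3,\pi/N$ yields the characterization. You have simply supplied the details of both implications that the paper leaves implicit, including the Gauss--Bonnet check and the verification that the inscribed discs realise the extremal radius. One small logical clean-up in your converse: rather than first asserting that a fundamental domain of $k$ full $N$-gons exists and then checking the centres lie in distinct orbits, it is cleaner to observe directly that torsion-freeness of $K$ forces $K$ to act freely on the set of $N$-gons, so the index count $[\Delta^\pm:K]/(2N)=k$ gives exactly $k$ orbits of $N$-gons, and one representative from each orbit furnishes the (possibly disconnected) fundamental domain you want.
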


\subsection{Primitive extremal surfaces} \label{sec:primitive}

For a given integer $N \ge 7$, the identity (\ref{eq:equation_k-extremal-k}) holds for infinitely many pairs $(k,g)$ distributed along a line $L_N$ in the $k$$g$-plane. A $k$-extremal surface of genus $g$ is uniformized by a group with a fundamental domain that can be decomposed as a reunion of $k$ $N$-gons if and only if the pair $(k,g)$ belongs to the line $L_N$, that can be described by the equation  
$$
g=2+\frac{k}{6}(N-6)
$$
We find 
\begin{equation} \label{eq:ln}
\left.
\begin{array}{c}
L_N= \left\{ (k,g)= \left( j, 2+ j(N-6)/6 \right), \ j \ge 1 \right\}  \mbox{ if } N \equiv 0 \pmod 6  \\
 \ \\
L_N= \left\{(k,g)=  \left( 6j, 2+ j(N-6) \right), \ j \ge 1 \right\} \mbox{ if } N \equiv \pm 1 \pmod 6   \\
 \ \\
L_N= \left\{  (k,g)=\left( 3j, 2+ j(N-6)/2 \right), \ j \ge 1 \right\}  \mbox{ if } N \equiv \pm 2 \pmod 6  \\
 \ \\
L_N= \left\{  (k,g)=\left( 2j, 2+ j(N-6)/3 \right), \ j \ge 1 \right\}  \mbox{ if } N \equiv 3 \pmod 6  
\end{array}
\right\}
\end{equation}

The following terminology will be useful in the rest of the paper. We call a compact non-orientable $k$-extremal surface of genus $g$  \emph{primitive} if one of the following conditions happens:
\begin{itemize}
\item $k=1$.
\item $k=2$ and $g$ is even.
\item $k=3$ and $g-2 $ is coprime to 3.
\item $k=6$ and $g-2$ is coprime to 6.
\end{itemize}

Given any number $N\ge 7$ there exists a well defined pair $(k_N, g_N)\in L_N$ such that compact non-orientable $k_N$-extremal surfaces of genus $g_N$ are primitive. More precisely,   $(k_N, g_N)$ is determined by setting $j=1$ in (\ref{eq:ln}), so that the primitive case corresponds to the lowest possible $k$ (or, equivalently, lowest possible $g$) for a given $N$. 

\

If $(\tilde{g}-2)/\tilde{k}=(g_N-2)/k_N$ for other values $\tilde{k} \neq k_N$, $\tilde{g}\neq g_N$, the pair $(\tilde{k},\tilde{g})$ belongs to the same line $L_N$ as $(k_N, g_N)$  does. The NEC groups uniformizing a $k_N$-extremal surface of genus $g_N$ or a $\tilde{k}$-extremal surface of genus $\tilde{g}$ are both contained in $\Delta^{\pm}(2,3,N)$, but the index $2\tilde{k}N$ in the imprimitive case is always a multiple of the index $2k_NN$ of the primitive case, as $k_N$ obviously divides $\tilde{k}$ (see (\ref{eq:ln})). 

\

The fundamental regions can be constructed as a connected reunion of regular $N$-gons of angle $2\pi/3$ in either case, and both groups are  generated by certain side-pairing transformations between edges of the $N$-gons lying in the boundary of the fundamental region they define (some other edges may lie in the interior). The side-pairing transformations must act in such a way that every point of the surface represented by a vertex of some of the $N$-gons must be the center of a complete $2\pi$ angular sector around it.

\section{Construction of primitive extremal surfaces via edge-grafting}

 In this section we construct an example of a primitive compact non-orientable $k_N$-extremal surface of genus $g_N$ for every $N \ge 7$ (for a completely different kind of construction, see \cite{DeBlois_2018}). Our use of NEC groups and fundamental domains provides a surprisingly simple way of ensuring the existence of extremal surfaces, with just four surfaces somehow \emph{generating} all the rest via certain combinatorial operations we call \emph{edge-grafting}.

\

Consider for example  the case $N=7$, and note that $(k_7, g_7)=(6,3)$. The picture labeled $X_7$ in Figure \ref{ejemplos} shows a specific example  of  a compact non-orientable  $6$-extremal surface of genus $3$. In Figure \ref{ejemplos}, and in the rest of similar figures in what follows, edges labeled with the same numbers are identified by an orientation preserving side-pairing transformation (opposite signs indicate that the side pairing is orientation reversing). The group generated by the side-pairings defining $X_7$ is, as explained in the previous section, contained in the triangle group $\Delta^{\pm}(2,3,7)$ with  index $2\times 6 \times 7=84$. 

\

The other  surfaces in the same Figure \ref{ejemplos}, namely $X_8, X_9$ and $X_{12}$,  are examples of compact non-orientable  $k$-extremal surfaces of genus $g$ for the pairs $(k,g) =(3,3)$, $(2, 3)$ and $(1, 3)$ respectively. These are the primitive pairs $(k_N,g_N)$ for $N=8, 9$ and $12$. One can look for these examples by directly  constructing the appropriate side-pairings (in such a way that every class of vertices is surrounded by a full $2\pi$-angular sector) or, more generally, one can try to find a subgroup of the corresponding triangle group with the required properties (the LowIndexSubgroup routine of GAP is very helpful for this).

\

\begin{figure}[!h]
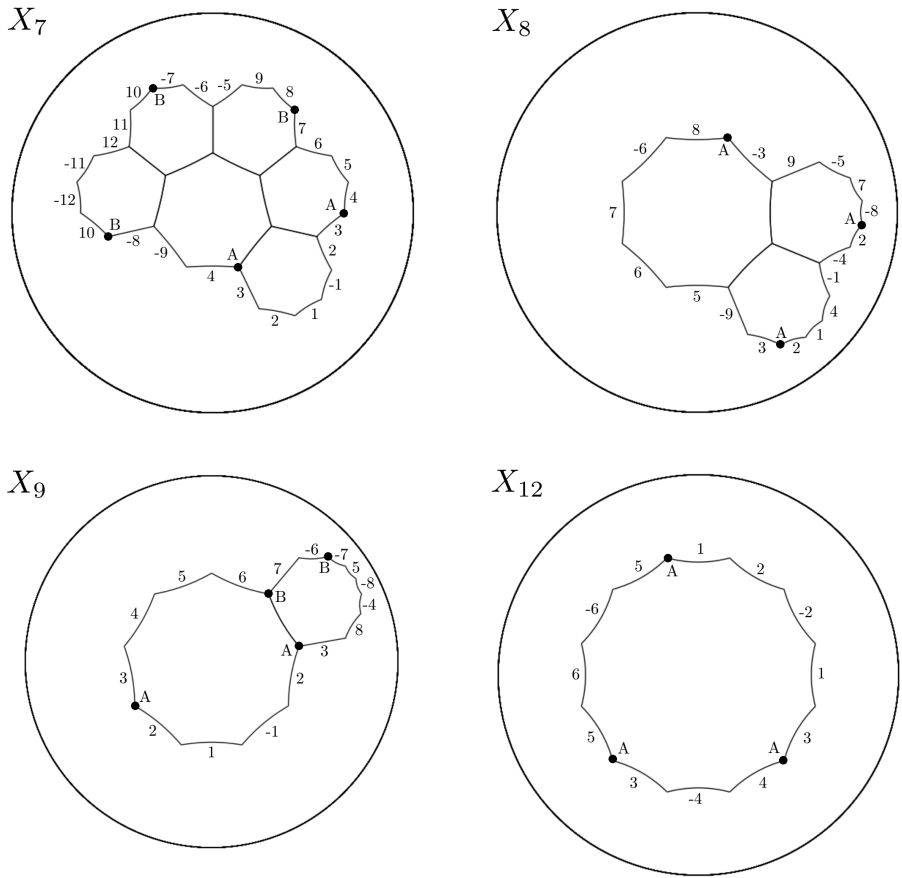

\begin{center}
\begin{tabular}{ccc}
\includegraphics[width=0.35\textwidth]{ejemplo_23-7.png} & \quad &  \includegraphics[width=0.35\textwidth]{ejemplo_23-8real.png} \\
\ & & \\
\includegraphics[width=0.35\textwidth]{ejemplo_23-9real.png} & \quad &\includegraphics[width=0.35\textwidth]{ejemplo_23-12.png} 
\end{tabular}
\caption{The four fundamental primitive extremal surfaces. From top to bottom, left to right, they are $k$-extremal and have genus $g$, for the 
values $(k,g)=(6,3), (3,3), (2,3)$ and $(1,3)$.}\label{ejemplos}
\end{center}
\end{figure}

Look now at the surface $X_8$ Figure \ref{ejemplos}, and consider the vertex-cycle marked as A. 
We can perform what we call an \emph{edge-grafting at A}, which topologically means  inserting  three pairs of new edges that will get paired as indicated at the top of Figure \ref{graft} , while we keep the old side-pairings between the old edges. This way we can replace the three original 8-gons by three 10-gons, still regular with angle $2\pi / 3$. In other words, the edge grafting procedure EG1 described at the top of Figure \ref{graft}, applied to the vertex $P=A$ in $X_8$, produces the surface $X_{10}$ in Figure \ref{ej2310}, which is nothing else than a $3$-extremal surface of genus 4, uniformized by an index 60 subgroup of the triangle group $\Delta(2,3,10)$. Note that $(k_{10},g_{10})=(3,4)$, so that $X_{10}$ is in fact a primitive extremal surface.

\

\begin{figure}[!h]
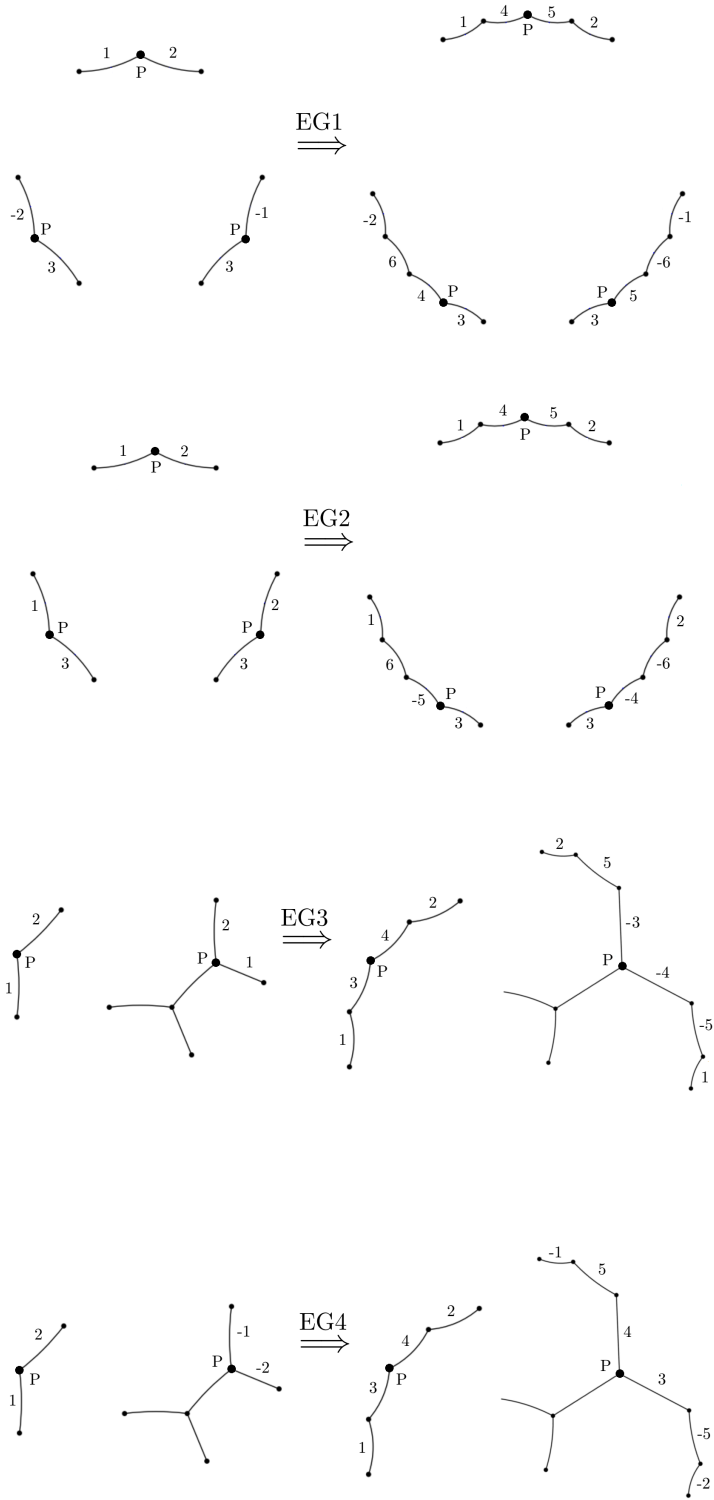

\begin{center}
\begin{tabular}{c}
\includegraphics[width=0.6\textwidth]{EG1.png} \\
\  \\
\includegraphics[width=0.6\textwidth]{EG2.png} \\
\ \\
\includegraphics[width=0.6\textwidth]{EG3.png} \\
\  \\
\includegraphics[width=0.6\textwidth]{EG4.png} 
\end{tabular}
\caption{Four different edge grafting procedures at a vertex-cycle $P$.}\label{graft}
\end{center}
\end{figure}

\begin{figure}[!h]
\begin{center}
\includegraphics[width=0.35\textwidth]{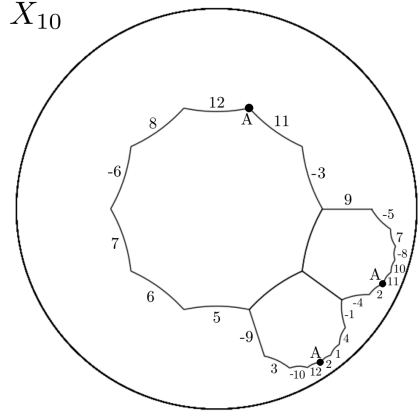} 
\caption{The surface $X_{10}$ obtained via edge-grafting EG1 at the vertex-cycle $A$ of $X_8$.}\label{ej2310}
\end{center}
\end{figure}

The second row of Figure \ref{graft} indicates schematically a second edge grafting EG2 that can be done at 
the vertex-cycle A of $X_{10}$. The result is a $3$-extremal surface of genus 5, uniformized by an index 72 subgroup of the triangle group $\Delta(2,3,12)$. Note that  $(k_{12},g_{12})=(1,3)$, so that this surface is imprimitive (on the other hand, we already have in Figure \ref{ejemplos}  a primitive surface $X_{12}$ for the case $N=12$).

\

There are two other edge grafting procedures that are essential in what follows: we call them EG3 and EG4, and we describe in Figure \ref{graft} how  they are defined. In contrast to EG1 and EG2, EG3 and EG4 involve a vertex $\mathrm{P}$ that can be seen only twice in the fundamental region we are dealing with: once at the boundary of a single polygon (angle $2\pi /3$) and once at a common edge of two polygons (angle $4\pi/3$). Note that, as it happens with EG1 and EG2, the procedures EG3 and EG4 can be performed consecutively in any order, since the resulting configuration of one of them is the starting one of the other. Both EG3 and EG4 could be done, respectively, at the vertices A and B of the surface $X_9$ shown in Figure \ref{ejemplos} (a primitive 2-extremal surface of genus 3). The result, $X_{15}$, is a primitive 2-extremal surface of genus 5.

\

Another convenient combination of edge graftings is useful in order to produce a primitive surface for the case $N=11$ from the primitive surface $X_7$ of  Figure \ref{ejemplos}. The adequate combination consists of doing EG3 at the vertex A and EG1 at the vertex B, which produces an increase of two edges in every polygon, thus giving a 6-extremal surface with $N=9$ (genus 5), which is imprimitive because $(k_9, g_9)= (2,3)$. But we could now combine EG4 and EG2 in this surface to obtain $X_{11}$, a primitive 6-extremal surface of genus 7 ($N=11$).

\

Summarizing, we  have already constructed a primitive extremal surface $X_N$ for the six basic cases $7\le N \le 12$. Further edge-grafting allows us to extend to arbitrary $N$. The point now is to recall again that EG1 and EG2 can be performed consecutively, and the same is true for EG3 and EG4. The primitive examples for any $N\equiv \pm 1 \pmod 6$ arise iterating what we did for creating $X_{11}$ from $X_7$. In the same way, we get primitive examples for $N\equiv \pm 2 \pmod 6$ by iterating  what we did for creating $X_{10}$ from $X_8$, and primitive examples for $N\equiv 3 \pmod 6$ can be obtained by iterating what we did with $X_9$ to obtain $X_{15}$. Finally, a sequence of alternate EG2 and EG1 edge graftings applied to $X_{12}$ produces the required primitive surface $X_N$ for any $N\equiv 0 \pmod 6$.


\section{Existence of non-orientable $k$-extremal surfaces of genus $g$}\label{S3}

We are now ready to proof the following existence result (see \cite{DeBlois_2018} for a different approach to this):

\begin{theorem} \label{th:existence} Given two integers $k\ge 1$ and  $g\ge 3$ there exist compact non-orientable  $k$-extremal surfaces of genus $g$ if and only if $k$ divides $6(g-2)$.
\end{theorem}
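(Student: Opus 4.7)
The \emph{only if} direction is immediate from equation (\ref{eq:equation_k-extremal-k}): the quantity $N = 6 + 6(g-2)/k$ is the number of sides of each regular polygon composing the fundamental domain of a $k$-extremal surface, hence it must be a positive integer, forcing $k\mid 6(g-2)$.

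For the \emph{if} direction, set $N = 6 + 6(g-2)/k \ge 7$, and write $(k,g) = (jk_N,\, 2 + j(g_N-2))$ for the unique integer $j \ge 1$ provided by (\ref{eq:ln}), where $(k_N, g_N)$ is the primitive pair on $L_N$. When $j = 1$, the primitive surface $X_N$ constructed in Section~3 via iterated edge-grafting from the four base examples of Figure~\ref{ejemplos} already does the job. When $j \ge 2$, my plan is to realise the desired surface as an unbranched $j$-fold non-orientable cover of $X_N = \mathbb{D}/K_N$, where $K_N < \Delta^{\pm}(2,3,N)$ is the torsion-free proper NEC group of index $2k_N N$ supplied by Theorem~\ref{th:char-triang}. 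Any index-$j$ subgroup $\tilde K \le K_N$ is automatically torsion-free, and if it moreover contains at least one orientation-reversing isometry then it is itself a proper NEC group. Its quotient $\mathbb{D}/\tilde K$ has Euler characteristic $j(2 - g_N) = 2 - g$, and a fundamental region for $\tilde K$ made of $j$ suitable translates of one for $K_N$ is a union of $jk_N = k$ regular $N$-gons of angle $2\pi/3$; by Theorem~\ref{th:char-triang}, $\mathbb{D}/\tilde K$ is then a $k$-extremal surface of non-orientable genus $g$.

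The main step, and I expect it to be the only delicate one, is to exhibit such a subgroup $\tilde K$ for every $j \ge 2$, equivalently, an index-$j$ subgroup of $K_N$ not entirely contained in its canonical Fuchsian subgroup of index $2$. I plan to do this via the abelianisation: since $g_N \ge 3$ for every $N \ge 7$, one has $H_1(X_N, \mathbb{Z}) \cong \mathbb{Z}^{g_N - 1} \oplus \mathbb{Z}/2$ with free rank at least $2$. In a basis $c_1,\ldots,c_{g_N}$ adapted to this decomposition the orientation character $\omega \colon H_1(X_N,\mathbb{Z}) \to \mathbb{Z}/2$ satisfies $\omega(c_i) = 1$ for every free generator $c_i$ with $i < g_N$, so the subgroup $j\mathbb{Z} c_1 \oplus \mathbb{Z} c_2 \oplus \cdots \oplus \mathbb{Z} c_{g_N - 1} \oplus (\mathbb{Z}/2)c_{g_N}$ has index $j$ in $H_1(X_N,\mathbb{Z})$ and contains $c_2$ with $\omega(c_2) = 1$. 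Pulling it back through the projection $K_N \to K_N^{\mathrm{ab}} = H_1(X_N,\mathbb{Z})$ yields the desired $\tilde K$, and completes the construction.
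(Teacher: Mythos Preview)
Your proposal is correct and follows essentially the same route as the paper: necessity from \eqref{eq:equation_k-extremal-k}, the primitive case from Section~3, and the imprimitive case by passing to an index-$j$ subgroup of the primitive surface group that still contains an orientation-reversing element. The paper packages this last step as Lemma~\ref{le:sub}, using the explicit homomorphism $h\colon K\to\mathbb{Z}$, $d_1\mapsto 1$, $d_2\mapsto -1$, $d_j\mapsto 0$ for $j\ge 3$, and taking $K_n=h^{-1}(n\mathbb{Z})$ (note $d_3\in K_n$ is orientation-reversing); your abelianisation argument is just a more structural rephrasing of the same construction.
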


\begin{proof}
The divisibility condition is necessary due to equation (\ref{eq:equation_k-extremal-k}). We already know from the previous section that the condition is sufficient for 
the primitive cases $(k,g)=(k_N, g_N)$, $N \ge 7$. All we have to do is to find the way for passing from primitive to imprimitive cases for each $N$, but this is possible due to Lemma \ref{le:sub} below. Recall that the index of the subgroup of $\Delta^{\pm}(2,3,N)$ uniformizing an imprimitive extremal surface is always a multiple of the index of the subgroups uniformizing primitive extremal surfaces for the same $N$.
\end{proof}

\begin{lemma}\label{le:sub} Suppose $K$ is a NEC group which uniformizes a compact non-orientable  surface of genus $g\ge 3$. Then for every $n\in\mathbb{N}$ there exists a subgroup $K_n\le K$ such that $[K:K_n]=n$ and $\mathbb{D}/K_n$ is also a compact non-orientable surface.
\end{lemma}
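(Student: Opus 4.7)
The plan is to construct $K_n$ explicitly as the kernel of a surjection $\sigma\colon K \twoheadrightarrow \mathbb{Z}/n\mathbb{Z}$, engineered so that the kernel necessarily contains an orientation-reversing element. First I would recall the standard canonical presentation of a torsion-free proper NEC group uniformizing a compact non-orientable surface of genus $g$, namely
$$K = \langle d_1, \ldots, d_g \mid d_1^2 d_2^2 \cdots d_g^2 \rangle,$$
where each $d_i$ is a glide reflection (orientation-reversing and fixed-point free).

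Next I would define $\sigma\colon K \to \mathbb{Z}/n\mathbb{Z}$ on generators by $\sigma(d_1)=1$, $\sigma(d_2)=-1$, and $\sigma(d_i)=0$ for $i\ge 3$. The relator is sent to $2(1+(-1)+0+\cdots+0)=0$, so $\sigma$ is well-defined, and it is surjective since $1$ lies in its image. Setting $K_n:=\ker\sigma$ therefore gives a subgroup of index $n$ in $K$. As a subgroup of a torsion-free group, $K_n$ is torsion-free, and as a finite-index subgroup of a cocompact NEC group it is cocompact. Hence $\mathbb{D}/K_n$ is a compact surface.

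It remains to show that the quotient is non-orientable, that is, that $K_n$ is not contained in the orientation-preserving (Fuchsian) index-two subgroup $K^+\le K$. This is where the hypothesis $g\ge 3$ plays its role: because $\sigma(d_3)=0$, the generator $d_3$ lies in $K_n$, and $d_3$ is orientation-reversing by construction. Thus $K_n\not\subseteq K^+$, so $\mathbb{D}/K_n$ is compact non-orientable, as required.

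The only nontrivial point in the plan is precisely this last verification, and it hinges on the existence of a ``spare'' glide reflection generator $d_3$ that can be sent to $0$. For $g\le 2$ the analogous construction would need extra care (and in fact the statement can fail for certain parities of $n$ in low genus), but the hypothesis $g\ge 3$ supplies the extra generator and makes the argument work uniformly for every $n\in\mathbb{N}$.
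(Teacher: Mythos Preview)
Your proof is correct and follows essentially the same construction as the paper: the paper defines a surjection $h\colon K\to\mathbb{Z}$ by $d_1\mapsto 1$, $d_2\mapsto -1$, $d_j\mapsto 0$ for $j\ge 3$ and sets $K_n=h^{-1}(n\mathbb{Z})$, which is exactly the kernel of your $\sigma$. In fact your write-up is more complete, since you explicitly verify non-orientability of $\mathbb{D}/K_n$ via $d_3\in K_n$, a point the paper leaves implicit.
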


\begin{proof}
The signature of $K$ is $(g;-;[-],\{-\})$, so this group admits a presentation $$K=\left\langle d_1,d_2,\ldots ,d_{g}:\ \ \prod\limits_{i=1}^{g}d_i^2=1\right\rangle$$ where the $d_i$'s are glide reflections of infinite order. If $h:K\longrightarrow\mathbb{Z}$ is the surjective homomorphism determined by 
$$d_1\mapsto 1,\ d_2\mapsto -1,\ d_j\mapsto 0 \ \forall j\in\{3\ldots g\}$$ 
then the required $K_n$ can be taken as $h^{-1}(n\mathbb{Z})$.
\end{proof}


 We note that the topology of a compact non-orientable  surface is completely characterized by the genus $g$, so Theorem \ref{th:existence} tells us when the existence of an extremal $k$-packing is possible on each of the different topologies. For instance, extremal $4$-packings exist if $g=4$ but they don't exist if $g=3$. On the other hand, it is certainly remarkable that certain $k$-packings do not encounter any topological obstruction:

\begin{theorem}\label{th3} There exists a compact non orientable $k$-extremal surface of genus $g$ for every $g\ge 3$
if and only if  $k\in\{1,2,3,6$\}.
\end{theorem}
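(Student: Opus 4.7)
The plan is to derive this as a direct corollary of Theorem \ref{th:existence}, which characterizes the existence of a compact non-orientable $k$-extremal surface of genus $g$ by the purely arithmetic condition $k \mid 6(g-2)$. The statement then reduces to a one-line divisibility exercise once one rephrases it in terms of $g-2$.

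More concretely, I would argue as follows. By Theorem \ref{th:existence}, the condition ``there exists a compact non-orientable $k$-extremal surface of genus $g$ for every $g \ge 3$'' is equivalent to requiring that $k$ divide $6(g-2)$ for every $g \ge 3$. Setting $m = g-2$, this is in turn equivalent to $k$ dividing $6m$ for every positive integer $m \ge 1$.

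For the necessity, specialising to $m = 1$ (that is, $g = 3$) forces $k \mid 6$, which gives $k \in \{1,2,3,6\}$. For the sufficiency, if $k \in \{1,2,3,6\}$ then $k \mid 6$, hence $k$ divides $6m$ for every $m \ge 1$, so in particular $k \mid 6(g-2)$ for every $g \ge 3$, and Theorem \ref{th:existence} produces the desired surface.

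There is essentially no obstacle here beyond correctly invoking Theorem \ref{th:existence}; the ``hard part'' was already absorbed into the proof of the existence theorem via edge-grafting and Lemma \ref{le:sub}. The only thing worth double-checking is that the case $g = 3$ is indeed admissible (it is, since the hypothesis in Theorem \ref{th:existence} is $g \ge 3$), so taking $g = 3$ is a legitimate test and the pinch-point of the argument.
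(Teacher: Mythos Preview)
Your proposal is correct and follows essentially the same approach as the paper: both reduce the question to the divisibility condition $k \mid 6(g-2)$ and then specialise to $g=3$ to force $k \mid 6$. If anything, your write-up is more complete than the paper's, which only spells out the necessity direction and leaves the sufficiency (via Theorem~\ref{th:existence}) implicit.
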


\begin{proof} By Equation (\ref{eq:equation_k-extremal}), as $k$ must divide $6(g-2)$ for every $g\ge 3$, then $k$ must divide $6$, so $k\in\{1,2,3,6\}$.
\end{proof}

In the cases when topology puts obstructions to the existence of  extremal $k$-packings, such packings  exist still in infinitely many  topological classes of compact non-orientable surfaces. More precisely:

\begin{theorem} Given $k\ge 1$, there are compact non-orientable $k$-extremal surfaces for every genus $g\ge 3$ such that $g\equiv 2\left(mod \left(\frac{k}{gcd(k,6)}\right)\right)$.
\end{theorem}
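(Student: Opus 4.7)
The plan is to reduce the statement to a direct application of Theorem \ref{th:existence} via an elementary number-theoretic observation. By that earlier theorem, the existence of a compact non-orientable $k$-extremal surface of genus $g\ge 3$ is equivalent to the condition $k\mid 6(g-2)$. Hence the entire task reduces to verifying that $g\equiv 2\pmod{k/\gcd(k,6)}$ implies $k\mid 6(g-2)$.

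For the arithmetic step, set $d=\gcd(k,6)$. The hypothesis $g\equiv 2\pmod{k/d}$ means that $(k/d)\mid (g-2)$, so $k\mid d(g-2)$. Since $d\mid 6$, we have $d(g-2)\mid 6(g-2)$, and therefore $k\mid 6(g-2)$. Thus every $g\ge 3$ in the stated congruence class satisfies the divisibility hypothesis of Theorem \ref{th:existence}, yielding a compact non-orientable $k$-extremal surface of genus $g$.

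There is no real obstacle: indeed, the converse is equally easy, because $\gcd(k/d,\, 6/d)=1$ implies that $k\mid 6(g-2)$ is in fact equivalent to $g\equiv 2\pmod{k/d}$. So the theorem is just a clean restatement of the existence criterion in Theorem \ref{th:existence} in the form of an arithmetic progression of admissible genera. Given that the underlying constructive content lies in the edge-grafting machinery of Section 3 and in Lemma \ref{le:sub}, the present result is essentially a corollary, and I would present it as such with a two or three line proof.
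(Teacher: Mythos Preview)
Your proof is correct and follows essentially the same approach as the paper: both reduce the statement to Theorem \ref{th:existence} via the elementary equivalence between $k\mid 6(g-2)$ and $g\equiv 2\pmod{k/\gcd(k,6)}$. If anything, you argue the direction actually required by the stated theorem (congruence $\Rightarrow$ divisibility $\Rightarrow$ existence), whereas the paper's written proof emphasizes the converse implication; but the arithmetic content is identical.
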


\begin{proof} As $k$ must divide $6(g-2)$, then $\frac{k}{gcd(k,6)}$ must divide the product $\frac{6}{gcd(k,6)}\cdot  (g-2)$, but the numbers $\frac{k}{gcd(k,6)}$ and $\frac{6}{gcd(k,6)}$ are coprime, so $\frac{k}{gcd(k,6)}$ must divide $g-2$.
\end{proof}

On the contrary, we notice that fixing the topology of a non-orientable surface restricts the  possible extremal $k$-packings to a finite number:

\begin{theorem} Given $g\ge 3$, there is a finite number of values of $k$ such that there exist compact non-orientable $k$-extremal surfaces of genus $g$.
\end{theorem}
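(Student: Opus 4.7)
The plan is to derive this finiteness statement as an immediate consequence of the existence criterion established in Theorem \ref{th:existence}, so very little machinery is needed beyond that theorem.

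First I would recall that, by Theorem \ref{th:existence}, if a compact non-orientable $k$-extremal surface of genus $g$ exists, then $k$ must divide $6(g-2)$. In particular, every admissible $k$ is a positive divisor of the fixed integer $6(g-2)$. Since $g\ge 3$ is fixed, $6(g-2)$ is a fixed positive integer and hence admits only finitely many positive divisors. This immediately bounds the number of admissible values of $k$ by the number of divisors of $6(g-2)$.

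To make the bound slightly more explicit, I would also point out the upper bound $k\le 6(g-2)$, which is automatic for any positive divisor of $6(g-2)$ (and is also consistent with the geometric requirement $N\ge 7$ coming from equation (\ref{eq:equation_k-extremal-k}), since $N=6+6(g-2)/k\ge 7$ forces $k\le 6(g-2)$). Thus the set of possible $k$ is contained in the finite set of positive divisors of $6(g-2)$, which proves the theorem.

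There is no real obstacle here: the content of the statement was already absorbed into the divisibility condition $k\mid 6(g-2)$, and fixing $g$ turns that condition into a finiteness statement by elementary number theory. The only observation worth making is that the bound on the number of values of $k$ is exactly $d(6(g-2))$, the number of positive divisors of $6(g-2)$, and this bound is sharp precisely because Theorem \ref{th:existence} guarantees that every such divisor is in fact realized.
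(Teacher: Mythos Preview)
Your argument is correct and follows essentially the same route as the paper: both deduce finiteness from the divisibility condition $k\mid 6(g-2)$ (the paper cites equation~(\ref{eq:equation_k-extremal}) directly, you cite Theorem~\ref{th:existence}, which encodes the same constraint). Your bound $d(6(g-2))$ is in fact the correct count of divisors---the paper writes $\varphi(6(g-2))$, which is a slip---and your remark that the bound is sharp, via the converse direction of Theorem~\ref{th:existence}, is a nice addition the paper omits.
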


\begin{proof} By Equation (\ref{eq:equation_k-extremal}) we know that $k$ must divide $6(g-2)$ for such fixed $g$, so there are $\varphi(6(g-2))<\infty$ posibilities for $k$, where $\varphi$ is Euler's function.
\end{proof}

\section{Compact non-orientable extremal surfaces which admit two types of extremal packings}

 Suppose that a compact non-orientable surface $X=\mathbb{D}\slash K$ is $k_1$-extremal and $k_2$-extremal at the same time, with $k_1< k_2$. Then we have 
  \begin{equation} \label{eq:inc}
K\le \Delta_1^{\pm}:=\Delta^{\pm}\left(2,3,\frac{6g+6k_1-12}{k_1}\right),\ \ \ \ \ \ \ \ K\le \Delta_2^{\pm}:=\Delta^{\pm}\left(2,3,\frac{6g+6k_2-12}{k_2}\right)
\end{equation}
and the inclusions  have indexes $12g+12k_i-24,\ i\in\{1,2\}$. In these conditions $\Delta_1^{\pm}$ and $\Delta_2^{\pm}$ are commensurable, i.e. $\Delta_1^{\pm}\cap\Delta_2^{\pm}$ is contained in both groups $\Delta_1^{\pm}$ and $\Delta_2^{\pm}$ with finite index.

\

The canonical Fuchsian subgroup of $\Delta_1^{\pm}\cap \Delta_2^{\pm}$ is precisely its index $2$ subgroup $\Delta_1^{+}\cap \Delta_2^{+}$, therefore the inclusion $\Delta_1^{\pm}\cap \Delta_2^{\pm}\le \Delta_j^{\pm}$ will have finite index if and only if the inclusion $\Delta_1^{+}\cap \Delta_2^{+}\le \Delta_j^{+}$ has finite index ($j=1$ or 2), which means that $\Delta_1^{\pm}$ and $\Delta_2^{\pm}$ are commensurable if and only if $\Delta_1^{+}$ and $\Delta_2^{+}$ are. 

\

In particular, (cf. \cite{Girondo_2018}) we deduce that either
$$\Delta_1^{\pm}=\Delta^{\pm}(2,3,14), \ \Delta_2^{\pm}=\Delta^{\pm}(2,3,7) \quad  
\mbox{ or } \quad  \Delta_1^{\pm}=\Delta^{\pm}(2,3,18), \ \Delta_2^{\pm}=\Delta^{\pm}(2,3,9)$$
and therefore 
  $$K\le \Delta^{\pm}(2,3,18)\cap \Delta^{\pm}(2,3,9)=\Delta^{\pm}(3,3,9) \quad \mbox{ or } \quad K\le \Delta^{\pm}(2,3,14)\cap \Delta^{\pm}(2,3,7)=\Delta^{\pm}(3,3,7).$$
  
  A straightforward computation shows that, as a function of $g$, the correct values of $k_j$ can be expressed as $k_1=(g-2)/2$, $k_2=2g-4$ in the first case (with $g$ necessarily even), and $k_1=(3g-6)/4$, $k_2=6g-12$ in the second case (with $g$ necessarily congruent to 2 modulo 4).
  
 Since 
 $$ (4g-8)\cdot 9 = 2k_2\cdot 9=[\Delta^{\pm}(2,3,9):K]=[\Delta^{\pm}(2,3,9): \Delta^{\pm}(3,3,9)][\Delta^{\pm}(3,3,9):K]=4[\Delta^{\pm}(3,3,9):K],$$ 
 a genus $g$  compact non-orientable surface simultaneously $(g-2)/2$- and $(2g-4)$- extremal would be uniformized by a torsion free proper NEC group
 of index $(g-2)\cdot 9$ inside $\Delta^{\pm}(3,3,9)$. The lowest possible value of $g$, namely $g=4$, corresponds to the Hurwitz index of $\Delta^{\pm}(3,3,9)$ (the minimal index of a surface subgroup), which equals 18. In general, taking genus $g=2r$ with $r\ge 1$ would give index $(2r-2)\cdot 9=18(r-1)$, a multiple of the Hurwitz index.

\
 
According to \cite{Izquierdo_1993} there always exist torsion free proper NEC subgroups of the Hurwitz index, but the Kleinian surface they uniformize may have or not have boundary. If we  define $K_{18}$ as the NEC group generated by the side-pairing transformations given in the hyperbolic $18$-gon in Figure \ref{ejemplos33n} (left), the quotient surface  $\mathbb{D}/K_{18}$ has empty boundary and defines indeed a genus $g=4$ surface which is $k_1$- and $k_2$- extremal for  the case $k_1=(g-2)/2=1$, $k_2=2g-4=4$. Higher index subgroups of $\Delta^{\pm}(3,3,9)$ can now be constructed using Lemma \ref{le:sub}.
  
  \
  
\begin{figure}[!htbp]
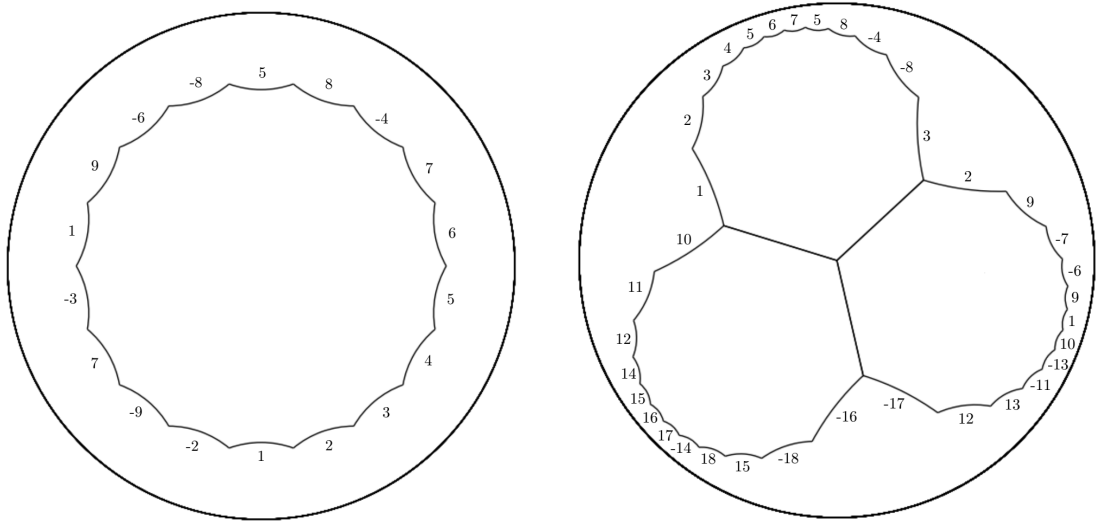

\begin{center}
\begin{tabular}{cc}
\includegraphics[width=0.45\textwidth]{ejemplo_339.png} & \includegraphics[width=0.45\textwidth]{ejemplo_337.png}
\end{tabular}
\caption{A surface NEC group  of genus $g=4$ inside $\Delta^{\pm}(3,3,9)$ (on the left) and of surface NEC group of genus $g=6$ inside $\Delta^{\pm}(3,3,7)$ (on the right).}\label{ejemplos33n}
\end{center}
\end{figure}

We can cover the second case in a similar way. Compute first
$$
 (12g-24)\cdot 7 = 2k_2\cdot 7=[\Delta^{\pm}(2,3,7):K]=[\Delta^{\pm}(2,3,7): \Delta^{\pm}(3,3,7)][\Delta^{\pm}(3,3,7):K]=8[\Delta^{\pm}(3,3,7):K],
$$
hence  a genus $g$  compact non-orientable surface simultaneously $(3g-6)/4$- and $(6g-12)$- extremal would be uniformized by a torsion free proper NEC group of index $(3g-6)\cdot 7/2$ inside $\Delta^{\pm}(3,3,7)$. Now the lowest value of $g$ involved is $g=6$, which gives index 42, again the Hurwitz index of $\Delta^{\pm}(3,3,7)$. For a general value $g=2+4r$ ($r \ge 1$), the index is $42r$, a multiple of the Hurwitz index.
The NEC group $K_{42}$ generated by the side-pairings in Figure \ref{ejemplos33n} (right) uniformizes a compact non-orientable surface (with empty boundary) of genus $g=6$ that is $(3g-6)/4=3$- and $(6g-12)=24$-extremal. As before, higher index subgroups of $\Delta^{\pm}(3,3,7)$ can be constructed with Lemma  \ref{le:sub}.

\

Summarizing, we have proved the following result:

\begin{theorem}  If a compact non-orientable hyperbolic surface of genus $g$ is $k_j$-extremal for $m>1$ values $k_1, \ldots, k_m$ then $m=2$ and 
 one of the following holds:
\begin{itemize}
\item[1)] $\{k_1,k_2\}=\left\{2g-4,\displaystyle\frac{g-2}{2}\right\}$ 
\item[2)] $ \{k_1,k_2\}=\left\{6g-12,\displaystyle\frac{3g-6}{4}\right\}$ 
\end{itemize}
Conversely, for every even genus $g\ge 4$  there exists some compact non-orientable surface of genus $g$ which is $k_1$- and $k_2$- extremal for the values $k_1,k_2$ in case 1). Also, for every genus $g \equiv 2 \pmod 4$  there exists some compact non-orientable surface of genus $g$ which is $k_1$- and $k_2$- extremal for the values $k_1,k_2$ in case 2).
\end{theorem}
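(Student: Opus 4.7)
The plan is to prove both directions by using Theorem \ref{th:char-triang} to translate the $k_i$-extremal hypothesis into an inclusion of the uniformizing NEC group $K$ in extended triangle groups, and then to combine the known classification of commensurable triangle groups with the explicit NEC subgroups exhibited in Figure \ref{ejemplos33n}.

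For the forward direction, suppose $X=\mathbb{D}/K$ is $k_i$-extremal for $i=1,\dots,m$ with $k_1<\dots<k_m$. By Theorem \ref{th:char-triang} each inclusion $K\le \Delta_i^{\pm}:=\Delta^{\pm}(2,3,(6g+6k_i-12)/k_i)$ has finite index $12g+12k_i-24$, so all the groups $\Delta_i^{\pm}$ are pairwise commensurable. Since the canonical Fuchsian subgroup $\Delta_1^{+}\cap\Delta_2^{+}$ has index $2$ inside $\Delta_1^{\pm}\cap\Delta_2^{\pm}$, commensurability of the extended triangle groups is equivalent to commensurability of the ordinary triangle subgroups $\Delta_i^{+}$, so I can appeal to Takeuchi's classification as already used in the orientable setting in \cite{Girondo_2018}. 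Among the $(2,3,n)$-triangle groups, only the pairs $(2,3,7)\sim(2,3,14)$ and $(2,3,9)\sim(2,3,18)$ are commensurable, which forces $m=2$ and identifies $\Delta_1^{\pm}$, $\Delta_2^{\pm}$ with one of these two pairs. Solving $(6g+6k_i-12)/k_i=n_i$ in each case produces $\{k_1,k_2\}=\{(g-2)/2,2g-4\}$ (requiring $g$ even) and $\{k_1,k_2\}=\{(3g-6)/4,6g-12\}$ (requiring $g\equiv 2\pmod 4$), which are precisely the two possibilities in the statement.

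For the converse, I would show how to realize each case at the smallest admissible $g$ and then propagate to higher genera. In case 1), the combined inclusions give $K\le \Delta^{\pm}(2,3,18)\cap\Delta^{\pm}(2,3,9)=\Delta^{\pm}(3,3,9)$, and a short index computation shows $[\Delta^{\pm}(3,3,9):K]=(g-2)\cdot 9$, which for $g=2r$ equals $18(r-1)$, a multiple of the Hurwitz index $18$ of $\Delta^{\pm}(3,3,9)$. The base case $r=2$ is realized by the NEC group $K_{18}$ generated by the side-pairings in Figure \ref{ejemplos33n} (left), which one must verify uniformizes a closed non-orientable surface of genus $4$; once this is done, Lemma \ref{le:sub} supplies subgroups of every further multiplicative index, giving examples for all even $g\ge 4$. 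Case 2) is completely analogous: $K\le \Delta^{\pm}(3,3,7)$ with index $(3g-6)\cdot 7/2$, which for $g=2+4r$ is $42r$; the base $g=6$ is realized by the explicit $K_{42}$ of Figure \ref{ejemplos33n} (right), and Lemma \ref{le:sub} then covers every $g\equiv 2\pmod 4$.

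The main obstacle I expect is not the index arithmetic or the appeal to Takeuchi, but rather the verification that the two explicit NEC groups of Figure \ref{ejemplos33n} indeed uniformize closed (i.e. boundary-free) non-orientable surfaces of the prescribed genera $4$ and $6$. This amounts to checking that the listed side-pairings include no reflections and that the vertex-cycle relations close up into full $2\pi$ angular sectors, together with an Euler-characteristic count; by \cite{Izquierdo_1993} such surface subgroups of Hurwitz index always exist, but for the converse statement one needs to confirm that the particular fundamental polygons drawn realize the non-orientable topology rather than a bordered or orientable alternative. Once these two base cases are certified, the remainder of the construction reduces to a mechanical application of Lemma \ref{le:sub}.
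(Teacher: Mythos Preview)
Your proposal is correct and follows essentially the same route as the paper: reduce via Theorem \ref{th:char-triang} to commensurability of the extended (hence Fuchsian) triangle groups, invoke the known classification from \cite{Girondo_2018} to pin down the two pairs $(2,3,7)/(2,3,14)$ and $(2,3,9)/(2,3,18)$, compute the resulting $k_i$ in terms of $g$, and for the converse realize the base cases $g=4$ and $g=6$ by the explicit NEC groups $K_{18}\le\Delta^{\pm}(3,3,9)$ and $K_{42}\le\Delta^{\pm}(3,3,7)$ of Figure \ref{ejemplos33n}, then propagate with Lemma \ref{le:sub}. Your caution about verifying that the side-pairings in Figure \ref{ejemplos33n} actually produce closed non-orientable surfaces is well placed, but the paper handles this point in exactly the same way you propose, by direct inspection of the figure.
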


\section{Uniqueness of $k$-packings inside compact non-orientable $k$-extremal surfaces of genus $g$}

By the characterization of compact non-orientable $k$-extremal surfaces in terms of triangle groups given in Theorem \ref{th:char-triang}, we know that if such a surface $X\cong \mathbb{H}\slash K$ admits two extremal $k$-packings, then we must have two inclusions $$K\le \Delta_1^{\pm}=\Delta_1^{\pm}\left(2,3,\frac{6g+6k-12}{k}\right),\quad  K\le \Delta_2^{\pm}=\Delta_2^{\pm}\left(2,3,\frac{6g+6k-12}{k}\right)$$ both with index $12g+12k-24$. 

\

We know that there exists an isometry $\gamma\in\mathrm{PGL}(2,\mathbb{R})$ such that $\gamma\Delta_1^{\pm}\gamma^{-1}=\Delta_2^{\pm}\neq \Delta_1^{\pm}$. Then we have  $\gamma\in\mathrm{Comm}^{\pm}(\Delta_1^{\pm})$, the so called commensurator of $\Delta_1^{\pm}$, defined as $$\mathrm{Comm}^{\pm}(\Delta_1^{\pm}):=\{\alpha\in\mathrm{PGL}(2,\mathbb{R}):\ \Delta_1^{\pm}\cap \alpha\Delta_1^{\pm}\alpha^{-1}\ \text{has finite index inside}\ \Delta_1^{\pm}\ \text{and}\ \alpha\Delta_1^{\pm}\alpha^{-1}\}.$$ 

We recall the following facts:

\renewcommand{\labelenumi}{\Roman{enumi}}

\begin{enumerate}
\item If $\mathrm{Comm}^{\pm}(\Delta_1^{\pm})=\Delta_1^{\pm}$, and $\gamma\Delta_1^{\pm}\gamma^{-1}=\Delta_2^{\pm}$ then $\gamma\in\Delta_1^{\pm}$, and in particular $\Delta_1^{\pm}=\Delta_2^{\pm}$.

\item If $\mathrm{Comm}^{\pm}(\Delta^{\pm}(2,3,N))$ is a discrete group, then $\mathrm{Comm}^{\pm}(\Delta^{\pm}(2,3,N))=\Delta^{\pm}(2,3,N)$. The reason is that $\Delta^{\pm}(2,3,N)$ is maximal for every $N$ (see \cite{Est_Izq_2006}), 

\item If the canonical Fuchsian subgroup $\Delta^{+}(2,3,N)$ of the extended triangle group $\Delta^{\pm}(2,3,N)$ is non-arithmetic, then $\mathrm{Comm}^{\pm}(\Delta^{\pm}(2,3,N))$ is a discrete group (because of Theorem 3.1 in \cite{Girondo_Nakamura_2007}, based on \cite{Margulis_1991}).

\item (\cite{Takeuchi_1977_a}) A Fuchsian triangle group $\Delta^{+}(2,3,N)$ is non-arithmetic if and only if $$N\not\in\{7,8,9,10,11,12,14,16,18,24,30\}.$$

\end{enumerate}

We can combine the previous results to see that for compact non-orientable  $k$-extremal surfaces whose NEC group $K$ is contained in a non-arithmetic triangle group $\Delta^{\pm}$, the $k$-packing will be unique. This happens as long as $$\frac{6g+6k-12}{k}\not\in\{7,8,9,10,11,12,14,16,18,24,30\}$$

Therefore we have:

\begin{theorem} \label{t4} If $X$ is a compact non-orientable  $k$-extremal surface of genus $g$ and 
$$
N=\frac{6g+6k-12}{k} \not\in\{7,8,9,10,11,12,14,16,18,24,30\}
$$ then the extremal $k$-packing is unique in $X$.
\end{theorem}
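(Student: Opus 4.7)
The proof will be essentially an assembly of the four facts I--IV that have just been recalled, together with the characterization of extremal $k$-packings given by Theorem~\ref{th:char-triang}. I would argue by contradiction: suppose a compact non-orientable $k$-extremal surface $X \cong \mathbb{D}/K$ admits two distinct extremal $k$-packings. By Theorem~\ref{th:char-triang}, each packing corresponds to an inclusion $K \le \Delta_i^\pm$ ($i=1,2$) of index $12g+12k-24$, where each $\Delta_i^\pm$ is a concrete realization of $\Delta^\pm(2,3,N)$ in $\mathrm{PGL}(2,\mathbb{R})$. A key preliminary observation is that $\Delta_i^\pm$ is fully determined by its packing: the $k$ disc-centres are the images in $X$ of the order-$N$ elliptic fixed points in $\mathbb{D}$ of $\Delta_i^+$. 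Hence two distinct packings really do produce two distinct subgroups $\Delta_1^\pm \ne \Delta_2^\pm$ of $\mathrm{PGL}(2,\mathbb{R})$.

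Next I would invoke rigidity of triangle groups: any two discrete subgroups of $\mathrm{PGL}(2,\mathbb{R})$ of signature $(2,3,N)$ are conjugate by some $\gamma \in \mathrm{PGL}(2,\mathbb{R})$, so one may write $\gamma \Delta_1^\pm \gamma^{-1} = \Delta_2^\pm$. The intersection $\Delta_1^\pm \cap \gamma \Delta_1^\pm \gamma^{-1} = \Delta_1^\pm \cap \Delta_2^\pm$ contains $K$ and therefore has finite index in both $\Delta_1^\pm$ and $\gamma \Delta_1^\pm \gamma^{-1}$. By definition, this places $\gamma$ in the commensurator $\mathrm{Comm}^\pm(\Delta_1^\pm)$.

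From here the four recalled facts give a cascade that closes the argument. The hypothesis $N \notin \{7,8,9,10,11,12,14,16,18,24,30\}$ combined with fact IV (Takeuchi) gives that $\Delta^+(2,3,N)$ is non-arithmetic; fact III then forces $\mathrm{Comm}^\pm(\Delta_1^\pm)$ to be discrete; fact II, applied to the maximal group $\Delta^\pm(2,3,N)$, collapses the commensurator to $\Delta_1^\pm$ itself; and finally fact I yields $\gamma \in \Delta_1^\pm$, whence $\Delta_2^\pm = \gamma \Delta_1^\pm \gamma^{-1} = \Delta_1^\pm$, contradicting distinctness of the two packings.

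The main technical point to get right is the first one: I should state clearly the identification between an extremal $k$-packing of $X$ and the concrete subgroup $\Delta^\pm \subset \mathrm{PGL}(2,\mathbb{R})$ it determines, via the order-$N$ elliptic fixed points. Once this correspondence is made precise, the rest of the argument is a purely formal chaining of the commensurator facts the authors have just recorded, with no further hyperbolic geometry needed.
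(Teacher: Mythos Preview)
Your proposal is correct and follows essentially the same line as the paper: two packings give two copies of $\Delta^{\pm}(2,3,N)$ containing $K$, rigidity makes them conjugate, the conjugating element lies in the commensurator, and then facts IV$\to$III$\to$II$\to$I collapse the commensurator to $\Delta_1^{\pm}$ in the non-arithmetic range, forcing $\Delta_1^{\pm}=\Delta_2^{\pm}$. Your explicit justification that distinct packings yield distinct $\Delta_i^{\pm}$ (via the order-$N$ elliptic fixed points projecting to the disc centres) makes precise a step the paper simply asserts.
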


For example, for $k=1$  Theorem \ref{t4}  says that $1$-packings in compact non-orientable  surfaces will be unique if $6g-6$ is not in the above list of numbers, which precisely occurs for $g>6$ (consistent with \cite{Girondo_Nakamura_2007}). In the series of papers \cite{Girondo_Nakamura_2007}, \cite{Nakamura_2009}, \cite{Nakamura_2012}, \cite{Nakamura_2013} and \cite{Nakamura_2016} all $1$-extremal surfaces of genus $g$ from 3 to 6 were studied in detail, and in particular the existence of extremal surfaces with more than one extremal disc was explicitly shown for these values of $g$.

\

Now, suppose that there exists a compact non-orientable primitive $k_N$-extremal surface $X$ 
of genus $g_N$ with multiple extremal $k_N$-packings. Denote $N=\displaystyle\frac{6g_N+6k_N-12}{k}$. As mentioned above, the NEC group $K$ uniformizing $X$ is contained simultaneously in 
$\Delta_1^{\pm}\left(2,3,N\right)$ and in some conjugate  $\gamma\Delta_1^{\pm}\left(2,3,N\right) \gamma^{-1}$, and the same happens obviously for any subgroup of $K$. By Lemma \ref{le:sub} we would deduce the existence of compact non-orientable imprimitive $k$-extremal surfaces  
of genus $g$ with multiple extremal $k$-packings for every pair $(k, g) \in L_N$ (recall the definition of $L_N$ from Section \ref{sec:discpack}).

\

It follows then that constructing an example of a primitive $k_N$-extremal surface of genus $g_N$ with multiple extremal $k_N$-extremal packings 
for the eleven possible values of $N$ in Theorem \ref{t4}, we would have proven the following converse result:

\begin{theorem}
If $k\ge 1$ and $g\ge 3$ are such that 
$$
\frac{6g+6k-12}{k} \in\{7,8,9,10,11,12,14,16,18,24,30\}
$$
then there exists a $k$-extremal compact non-orientable surface with multiple extremal $k$-packings. 
\end{theorem}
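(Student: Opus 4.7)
The plan is to capitalize on the reduction already carried out just before the statement: it suffices to exhibit, for each of the eleven arithmetic values $N\in\{7,8,9,10,11,12,14,16,18,24,30\}$, a single primitive compact non-orientable $k_N$-extremal surface $X=\mathbb{D}/K$ of genus $g_N$ admitting two distinct extremal $k_N$-packings. Once these are in hand, the passage to every other pair $(k,g)\in L_N$ is automatic: applying Lemma \ref{le:sub} to $K$ yields, for each $n\ge 1$, a torsion-free NEC subgroup $K_n\le K$ of index $n$ uniformizing a compact non-orientable surface, and the two inclusions $K\le\Delta^{\pm}(2,3,N)$ and $K\le\gamma\Delta^{\pm}(2,3,N)\gamma^{-1}$ witnessing the two packings on $X$ are inherited by $K_n$ (with the corresponding indices $12g+12k-24$, by the multiplicativity of indices on $L_N$).

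The main task is therefore the explicit construction of the eleven primitive examples. For each value of $N$ one needs a torsion-free NEC group $K$ of index $2k_N N$ in $\Delta^{\pm}(2,3,N)$ whose quotient is non-orientable of genus $g_N$, and such that there exists $\gamma\in\mathrm{PGL}(2,\mathbb{R})\setminus\Delta^{\pm}(2,3,N)$ with $K\le\gamma\Delta^{\pm}(2,3,N)\gamma^{-1}$. I would approach this by two complementary routes. First, directly in the spirit of Section 3: start from one of the four fundamental primitive surfaces of Figure \ref{ejemplos} and apply sequences of the edge-grafting operations \textbf{EG1}--\textbf{EG4} to obtain candidate fundamental domains of $K$ as unions of $k_N$ regular $N$-gons of angle $2\pi/3$, looking for configurations that carry an alternative decomposition into $k_N$ regular $N$-gons with a different centre-set. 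Second, exploit the arithmetic character of $\Delta^{\pm}(2,3,N)$ (Fact IV in the text): since the commensurator $\mathrm{Comm}^{\pm}(\Delta^{\pm}(2,3,N))$ is non-discrete and in fact dense in $\mathrm{PGL}(2,\mathbb{R})$, there are infinitely many distinct conjugates of $\Delta^{\pm}(2,3,N)$ intersecting it in a finite-index subgroup, which furnishes a rich supply of candidate pairs $(\Delta^{\pm},\gamma\Delta^{\pm}\gamma^{-1})$ for $K$ to lie in simultaneously. In practice, this second viewpoint is conveniently handled by Takeuchi's quaternion-algebra realization of $\Delta^{+}(2,3,N)$: the surface subgroups of the orbit under the full commensurator can be enumerated, and a LowIndexSubgroups search in GAP inside $\Delta^{\pm}(2,3,N)$ isolates the torsion-free proper NEC subgroups of index $2k_NN$ with non-orientable quotient of genus $g_N$. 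For each such subgroup $K$ one then checks whether the normaliser/commensurator provides a $\gamma\notin\Delta^{\pm}(2,3,N)$ conjugating $K$ into $\Delta^{\pm}(2,3,N)$, a finite computation at every step.

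The hardest part will be the case-by-case verification that the constructed surfaces actually admit two \emph{distinct} extremal packings, rather than two equivalent ones related by an automorphism of $X$. What needs to be exhibited, for each $N$, is either a pair of genuinely different decompositions of a fundamental domain of $K$ into $k_N$ regular $N$-gons with angles $2\pi/3$ (equivalently, by Theorem \ref{th:char-triang}, two inclusions $K\le\Delta^{\pm}_j$ in non-conjugate copies of the extended triangle group), or a conjugating element $\gamma$ in the commensurator realising $K$ as a subgroup of two different copies of $\Delta^{\pm}(2,3,N)$. Since the list of eleven values of $N$ is finite, this is a finite checklist; the primitive cases with small indices (notably $N=7,8,9,12$, where $k_N N\le 72$) are amenable to direct combinatorial display as in Figures \ref{ejemplos} and \ref{ej2310}, and the remaining cases follow by applying the edge-grafting procedures described in Section 3 together with a verification that the new $N$-gon decomposition remains compatible with a second triangle-group overgroup.
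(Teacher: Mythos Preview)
Your reduction to the eleven primitive cases via Lemma \ref{le:sub} matches the paper exactly, and your observation that arithmeticity of $\Delta^{+}(2,3,N)$ is what makes a second overgroup $\gamma\Delta^{\pm}\gamma^{-1}\neq\Delta^{\pm}$ available is also the right idea. However, the execution you outline diverges from the paper in a way that would make the problem substantially harder, and there is a genuine conceptual slip in your last paragraph.

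The slip is in the sentence ``two \emph{distinct} extremal packings, rather than two equivalent ones related by an automorphism of $X$'' and in the phrase ``two inclusions $K\le\Delta^{\pm}_j$ in non-conjugate copies of the extended triangle group''. Two copies of $\Delta^{\pm}(2,3,N)$ in $\mathrm{PGL}(2,\mathbb{R})$ are \emph{always} conjugate; what matters for having two packings is only that $\Delta_1^{\pm}\neq\Delta_2^{\pm}$ as subgroups. More importantly, two packings related by an automorphism of $X$ \emph{do} count as multiple packings, provided the automorphism actually moves the set of disc centres. In fact this is precisely the mechanism the paper uses: for each of the seven new cases $N\in\{7,8,9,10,11,14,16\}$ it exhibits an explicit surface $X=\mathbb{D}/K$ together with an order-2 elliptic element $\gamma\in\mathrm{PGL}(2,\mathbb{R})$ that \emph{normalises} $K$ (hence descends to an involution of $X$) but lies \emph{outside} $\Delta^{\pm}(2,3,N)$ (hence does not preserve the packing). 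The paper simply displays the side-pairings and the numerical fixed point $\mathrm{P}$ of $\gamma$ in each case, deferring the computational search to a companion note. The four remaining cases $N\in\{12,18,24,30\}$, where $k_N=1$, are quoted from the existing literature on extremal discs. Your edge-grafting and quaternion-algebra searches could in principle locate such examples, but you would be aiming at the wrong target if you insisted the two packings not be automorphism-related; indeed, the paper closes by remarking that whether two packings can ever fail to be related by an automorphism is an open problem.
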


Four of the eleven surfaces we need have occurred already in the literature, namely the ones corresponding to the cases for which $k_N=1$ (see \cite{Girondo_Nakamura_2007}, \cite{Nakamura_2009}, \cite{Nakamura_2012}, \cite{Nakamura_2013} and \cite{Nakamura_2016}). These cases correspond to $N=12, 18, 24$ or $30$. 

\

We  simply present here an explicit example of the remaining cases. The reason explaining why all them have of a second $k$-packing is the existence of an automorphism of the surface that does not fix the first $k$-packing we start with. These automorphisms are induced by order 2 elliptic transformations not belonging to the corresponding triangle group, and this is why they are very difficult to detect. We refer to \cite{arxiv} for the reader interested in the (lengthy, and highly non-trivial) computational details of how can one construct such examples.

\

Figure \ref{N7ejemplo_varios} shows two isometric fundamental regions for the NEC group uniformizing the surface $X_7$ we already showed in Figure \ref{ejemplos}, corresponding to two different extremal 6-packings. The point $\mathrm{P}$ is fixed by an order two orientation preserving automorphism that permutes both 6-packings. Surprisingly enough, the two sets of side-pairing transformations on both sides of the figure (strong-lined fundamental region on the left, light-lined fundamental region on the right) generate \emph{exactly} the same group.

\begin{figure}[!h]
\begin{center}
\includegraphics[width=\textwidth]{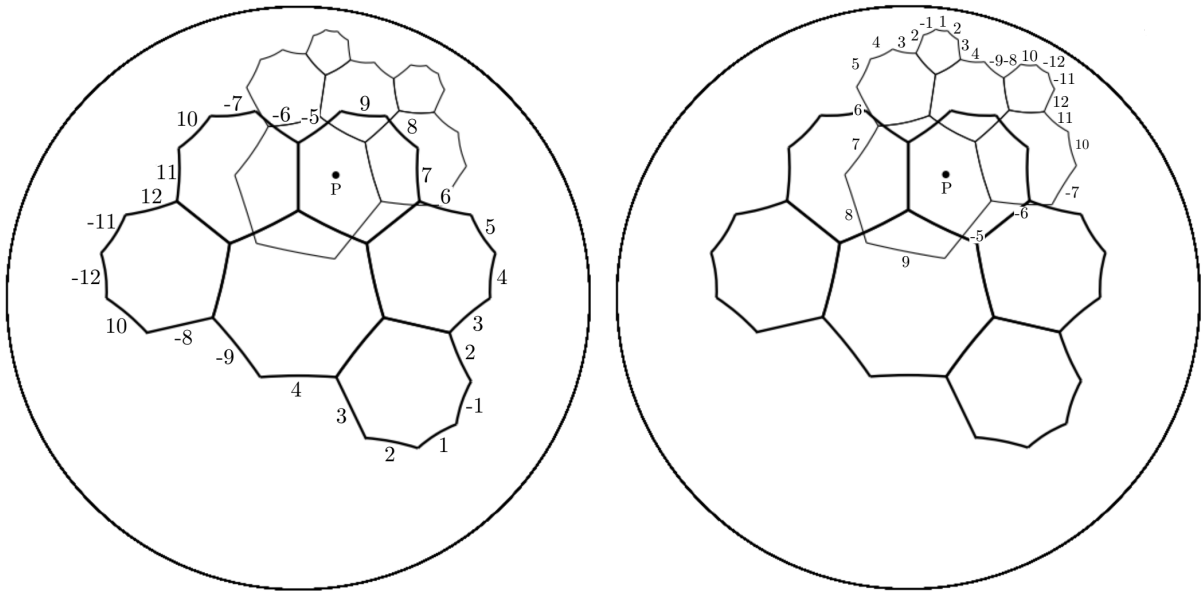} 
\caption{A surface of genus $g=3$ with two extremal 6-packings. An orientation preserving involution 
fixing $\mathrm{P} \simeq 0.129 + 0.422 i$ transposes both 6-packings.}\label{N7ejemplo_varios}
\end{center}
\end{figure}

Figure \ref{ejemplos_varios} shows the required examples for the cases $N=8, 9, 10, 11, 14$ and $16$. In each case the point $\mathrm{P}$ is fixed by the orientation preserving involution that produces a second extremal packing. The interested reader can check that the order 2 elliptic transformation fixing the point $\mathrm{P}$, whose approximate numerical expression is given in Table \ref{table_p}, normalizes the group generated by the side pairing transformations in all the cases.
We do not depict the fundamental regions associated  to the second
 \emph{hidden} extremal packings (analogous to the light-lined union of polygons in Figure \ref{N7ejemplo_varios}) in order to make the figure simpler.
 
 \begin{table}[!htbp] 
 \begin{center}
 \begin{tabular}{|c|c|c|c|c|c|c|}
 \hline
 $(k,g)$ & (3,3) & (2,3) & (3,4) & (6,7) & (3,6) & (3,7) \\  \hline
 $\mathrm{P}\simeq$ & 0.433+0.113i & 0.677+0.415i & 0.310-0.225i &  -0.277-0.191i & -0.360 + 0.201i &  0.284 - 0.131i \\ \hline
 \end{tabular}
 \end{center}
 \caption{A numerical approximation of the point $\mathrm{P}$ in the surfaces depicted in Figure \ref{ejemplos_varios}.} \label{table_p}
 \end{table}

\begin{figure}[!hptb]
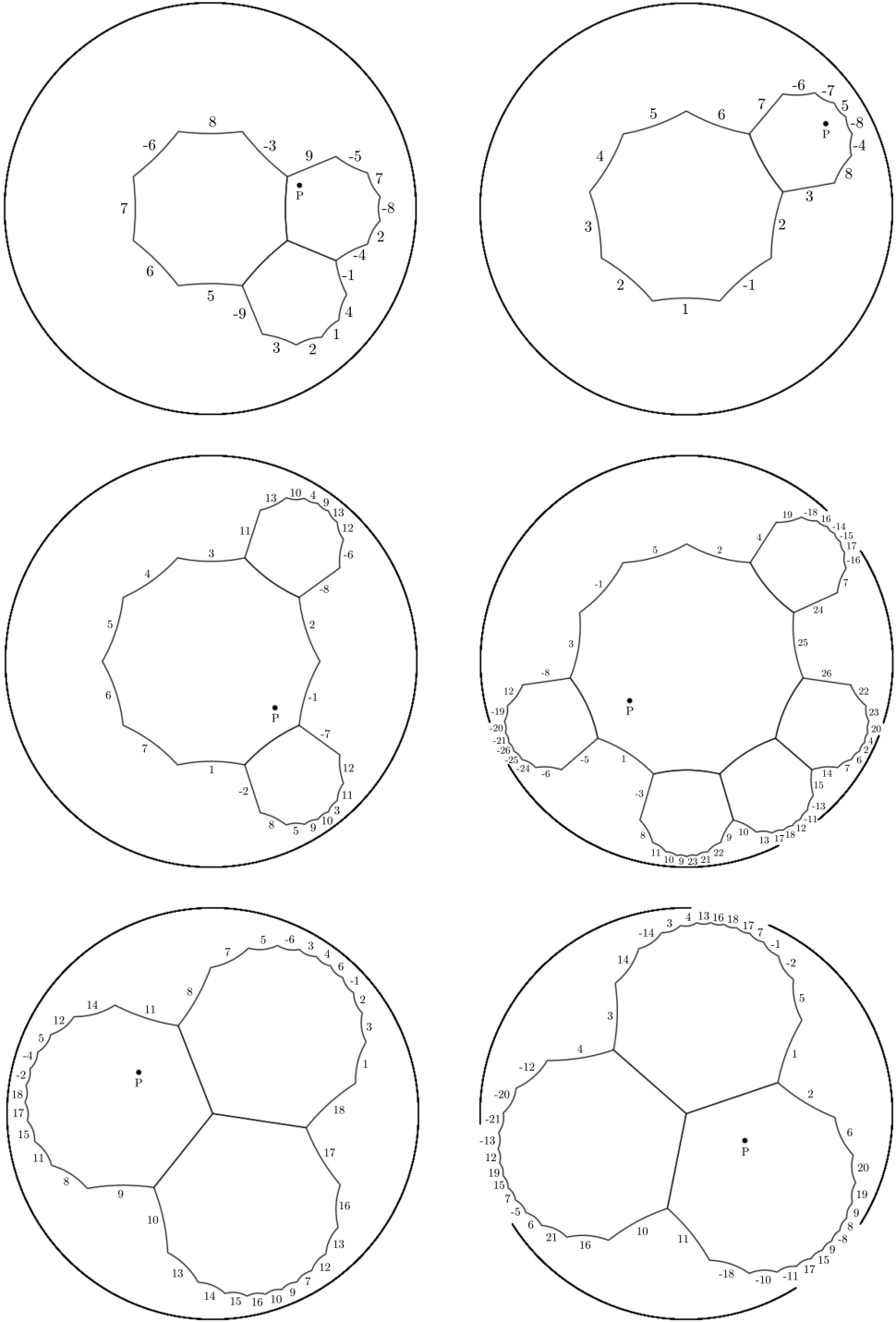

\begin{center}
\begin{tabular}{cc}
\includegraphics[width=0.4\textwidth]{N8ejemplo_varios.png} & \includegraphics[width=0.4\textwidth]{N9ejemplo_varios.png}\\
\includegraphics[width=0.4\textwidth]{N10ejemplo_varios.png} & \includegraphics[width=0.4\textwidth]{N11ejemplo_varios.png}\\
\includegraphics[width=0.4\textwidth]{N14ejemplo_varios.png} & \includegraphics[width=0.4\textwidth]{N16ejemplo_varios.png}
\end{tabular}
\caption{A colection of compact hyperbolic $k$-extremal surfaces of genus $g$ for the pairs (from left to right, top to bottom) $(k,g)=(3,3), (2,3), (3,4), (6,7), (3,6), (3,7)$.}\label{ejemplos_varios}
\end{center}
\end{figure}

\

It is not clear if two extremal packings within a given surface must be always related by an automorphism. This is certainly the case for $k=1$,  but determining whether or not this is a general result remains an interesting open problem.

\section{Non-orientable $k$-extremal surfaces and their double cover}

 The existence of a compact non-orientable $k$-extremal surface of genus $g$ is equivalent to the existence of a proper surface NEC group $K$ with index $12g+12k-24$ inside an extended triangle group $\Delta^{\pm}\left(2,3,\frac{6g+6k-12}{k}\right)$. For such $K$ we can consider its canonical Fuchsian subgroup $K^{+}$, which is contained in $K$ with index $2$, and then we have the following chain of inclusions
 
 \
 
\centerline{ \xymatrix{
 & \Delta^{\pm} & \\
 K\ar[ur]^{6g+6k-12} & & \Delta^{+}=\Delta^{\pm}\cap \mathrm{PSL}(2,\mathbb{R}) \ar[ul]_{2}  \\
 & K^{+}\ar[ur]_{\ \ 6g+6k-12}\ar[ul]^{2} & 
}}
 
\

It follows that $K^{+}$ uniformizes a compact Riemann surface of genus $g'$ given by the formula
$$2g'-2=[2\cdot(6g+6k-12)]\left(2\cdot 0-2+\left(1-\frac{1}{2}\right)+\left(1-\frac{1}{3}\right)+\left(1-\frac{1}{\frac{6g+6k-12}{k}}\right)\right)$$ 
which yields $g'=g-1$ after a straightforward computation.\\

In other words, given a compact non-orientable $k$-extremal surface of genus $g$ there is an associated compact Riemann surface with genus $g-1$, uniformized by a Fuchsian group $K^{+}$ included in $\Delta^{+}\left(2,3,\frac{6g+6k-12}{k}\right)$. This Riemann surface is certainly a $k'$-extremal surface,  and since 
$$\frac{6g+6k-12}{k}=\frac{12(g-1)+6k'-12}{k'}$$ 
we conclude that $k'=2k$. We have proved the following

\begin{theorem} If $X=\mathbb{H}\slash K$ is a compact non-orientable $k$-extremal surface of genus $g$, then $K^{+}=K\cap \mathrm{PSL}(2,\mathbb{R})$ uniformizes a compact $2k$-extremal Riemann surface $\mathbb{H}\slash K^{+}$ of genus $g-1$ which is a double cover of $X$.
\end{theorem}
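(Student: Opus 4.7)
The plan is to descend to the canonical Fuchsian subgroup $K^+=K\cap \mathrm{PSL}(2,\mathbb{R})$ and then apply the Riemann-surface analog of Theorem \ref{th:char-triang} from \cite{Girondo_2018}. By Theorem \ref{th:char-triang}, $K$ sits as a torsion-free proper NEC subgroup of $\Delta^{\pm}(2,3,N)$ with $N=(6g+6k-12)/k$ and index $12g+12k-24$. Since $K$ contains orientation-reversing isometries (it uniformizes a non-orientable surface), $K^+$ has index $2$ in $K$, the quotient $\mathbb{H}/K^+$ is a Riemann surface, and the map $\mathbb{H}/K^+\to \mathbb{H}/K$ is precisely the canonical orientable double cover.

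I would next pin down the topology of $\mathbb{H}/K^+$ by Euler characteristic doubling: $\chi(\mathbb{H}/K^+)=2\chi(\mathbb{H}/K)=2(2-g)$, so its orientable genus is $g'=g-1$; equivalently, a comparison of hyperbolic areas gives $\mathrm{Area}(\mathbb{H}/K^+)=4\pi(g-2)=2\pi(2g'-2)$. The inclusion $K^+\subset \Delta^+(2,3,N):=\Delta^{\pm}(2,3,N)\cap \mathrm{PSL}(2,\mathbb{R})$ is then automatic, and a diamond-of-indices computation (using $[K:K^+]=[\Delta^{\pm}:\Delta^+]=2$) yields $[\Delta^+(2,3,N):K^+]=12g+12k-24$.

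To finish I would invoke the orientable characterization of \cite{Girondo_2018}: a compact Riemann surface of genus $g'$ is $k'$-extremal if and only if it is uniformized by a torsion-free Fuchsian subgroup of $\Delta^+(2,3,(12g'+6k'-12)/k')$ of the corresponding index. Matching the triangle-group parameter forces $(12g'+6k'-12)/k'=N=(6g+6k-12)/k$; plugging in $g'=g-1$ and solving the resulting linear equation gives $k'=2k$, and the index count $12g+12k-24$ is consistent with the one predicted on the orientable side (this is the single algebraic identity at the heart of the proof). The argument is essentially an indices-and-areas bookkeeping exercise; the only ``obstacle'' is verifying that the parameters $(k,g,N)$ on the non-orientable side transport correctly to $(2k,g-1,N)$ on the orientable side, which is exactly what the above computation does.
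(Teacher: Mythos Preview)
Your proof is correct and follows essentially the same route as the paper: pass to $K^{+}\le \Delta^{+}(2,3,N)$, compute the genus of $\mathbb{H}/K^{+}$ (you use Euler characteristic/area doubling, the paper uses Riemann--Hurwitz for the cover $\mathbb{H}/K^{+}\to\mathbb{H}/\Delta^{+}$, which amounts to the same area computation), and then match the triangle-group parameter $N=(6g+6k-12)/k=(12g'+6k'-12)/k'$ to read off $k'=2k$.
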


\bibliography{references}  
\bibliographystyle{alpha}

\end{document}